\documentclass[10pt,A4paper]{amsart}

\usepackage{extensions}

\begin{document}
\title[]{Poincaré inequalities and integrated curvature-dimension criterion for generalised Cauchy and convex measures}

\author[B. Huguet]{Baptiste Huguet} 
\address{IRMAR (ENS Rennes), UMR CNRS 6625, Univ. Rennes, France}
\email{baptiste.huguet@math.cnrs.fr}
\urladdr{https://www.math.cnrs.fr/~bhuguet/}
\keywords{Curvature-dimension criterion; generalised Cauchy measures; heavy tails; Poincaré inequality}

\begin{abstract}
We obtain new sharp weighted Poincaré inequalities on Riemannian manifolds for a general class of measures. When specialised to generalised Cauchy measures, this gives a unified  and simple proof of the weighted Poincaré inequality for the whole range of parameters, with the optimal spectral gap, the error term and the extremal functions.
\end{abstract}
\maketitle
\setcounter{tocdepth}{1}
\tableofcontents

\section{Introduction}
The family of generalised Cauchy distributions is a family of probability measures defined on $\R^n$ by~$d\mu_\beta\sim (1+|x|^2)^{-\beta} dx$, for $\beta>n/2$. This family plays a significant role in a variety of mathematics issues. For example, it acts as the heat kernel for fast diffusions. In this context, it is known as the Barenblatt profile. \cite{BobLed} pointed out that generalised Cauchy measures approach the Gaussian measure when $\beta$ diverges, after rescaling. This prompted them to investigate the properties of these measures in terms of functional inequalities, specifically Poincaré inequalities. Due to heavy tails, these measures cannot satisfy the classical Poincaré inequality. Nevertheless, a weighted version of it has been proved, with weight~$\omega(x) = 1+|x|^2$. For all $n\geq1$ and all $\beta\geq n$ (or~$\beta>1$ if~$n=1$), we have
\[\frac{2(\beta-1)}{\left(\sqrt{1+\frac{2}{\beta-1}}+\sqrt{\frac{2}{\beta-1}}\right)^2}\Var_{\mu_\beta}(f)\leq \int_{\R^n}|df|^2(1+|x|^2)\, d\mu_\beta,\]
for all smooth bounded functions $f$. Their result is sufficiently sharp to recover the classical Poincaré inequality for Gaussian measures in $\R^n$, by rescaling. However, it does not cover the entire range $n/2<\beta$ and the constant is not optimal. 

Poincaré inequalities, whether classical or weighted, are equivalent to spectral properties of a suitable operator. In the case of generalised Cauchy measures, this operator is $Lf = \omega\Delta f -2(\beta-1)\langle df, x\rangle$, defined on smooth compactly supported functions $f\in\CC^\infty_c(\R^n)$ and extended to its domain $\DD(L)\in\L^2(\mu_\beta)$. The spectrum of this operator was studied by \cite{DMc} from the perspective of fast diffusion equations, providing a thorough description of its bottom, its eigenvalues and associated eigenfunctions. Their proof is based on a decomposition along spherical harmonics. This description allows \cite{BBDGV} to prove weighted Poincaré inequalities with optimal constant. However, this approach is highly specific to generalised Cauchy measures.  

These articles paved the way to many projects on weighted Poincaré inequalities. On the one hand, these works tackle the question towards more general measures. The  article \cite{BobLed} has studied the general broad framework of convex measures. These measures are defined using a Brunn-Minkowski-type inequality. In $\R^n$, they can be characterised as probability measures whose density is proportional to $\omega^{-\beta}$ for a positive convex function $\omega$ defined on some convex set and $\beta\geq n$. This is the context in which most of the work has been carried out. Their aim is to establish weighted Poincaré inequalities with sharp control over the optimal constant.  On the other hand, they question the relevance of probabilistic arguments to spectral theory, in particular, the relevance of Markov semigroups and curvature-dimension theory. These works often lead to more general functional inequalities, such as logarithmic-Sobolev or other~$\Phi$-entropies. 

Sharp weighted Poincaré inequalities were proved by \cite{Ngu}, for convex measures, with an additional assumption of strong convexity, that is~$d\mu \sim \omega^{-\beta} dx$ on $\R^n$, where $\omega$ is smooth and $\Hess(\omega)$ uniformly lower bounded. His proof is inspired by Hörmander's $L^2$-method. His result depends on $\Hess(\omega)$ explicitly, but is only valid for $\beta\geq n+1$. In the case of generalised Cauchy measures, his result is optimal and associated with an eigenfunction. 

The case of generalised Cauchy measures was completed by \cite{BJM2} in dimension one using the intertwining approach. A milestone was reached by \cite{BJM} for the $n$-dimensional case. They obtained a general spectral gap comparison between a radial measure and its one-dimensional radial part. When applied to generalised Cauchy measures, they prove weighted Poincaré inequalities for the whole range~${\beta>n/2}$. Nonetheless, they cannot reach the optimal constant over a small range of $\beta$. 

In addition to these major articles, several papers recovered partial results and enlightened new perspectives. Among them, \cite{Saum} used Stein kernels to recover the optimal bound in dimension $1$ for $\beta >3/2$. In \cite{ABJ}, a new bound was obtained for $\beta>n$, using metric twisting and intertwining. Unfortunately, their bound is not optimal. In \cite{Hug}, the previous method was generalised to Riemannian manifolds. Interpreting the weight as a metric, it can be applied to generalised Cauchy measures in dimension~$n=2$. Then it provides a better bound, even optimal for $1<\beta\leq 1+\sqrt{2}$. The optimal bound was recovered for $\beta\geq n+1$ in a Riemannian setting by \cite{BGS}, using harmonicness and curvature-dimension arguments. Their result is a geometric generalisation of \cite{Ngu}. This Riemannian result was recovered by \cite{GZ}, using the curvature-dimension criterion on the Laplace operator only, with applications to Poincaré and Beckner inequalities.

Each one of these articles uses different perspectives and different techniques. Together, they do not allow to properly understand the link between curvature-dimension criterion and Poincaré inequality, especially for generalised Cauchy measures. The goal of the present work is to obtain a simple, self-contained and intrinsic proof of sharp Poincaré inequalities using curvature-dimension arguments only. A novelty in our work is to find out a result beyond the condition $\beta\geq n+1$ and even beyond convex measure condition $\beta\geq n$. When specialised to generalised Cauchy measures, we recover the spectral gap $\lambda_1(-L)$, that is to say the optimal constant such that
\[\lambda_1(-L)\Var_{\mu_\beta}(f)\leq\int_{\R^n}|df|^2(1+|x|^2)\, d\mu_\beta,\]
for the whole range of parameters. Our guiding principle is to recover the following result from \cite{DMc}. 

\begin{theorem}\label{prop:spec}
For $n=1$, we have
\[\lambda_1(-L) =\left\{
\begin{array}{ll} 
(\beta-1/2)^2 & \text{if}\quad 1/2<\beta\leq 3/2\\
2(\beta-1) & \text{if}\quad 3/2\leq \beta.
\end{array}\right.\]
For $n\geq2$, we have
\[\lambda_1(-L) =\left\{
\begin{array}{ll} 
(\beta-n/2)^2 & \text{if}\quad n/2<\beta\leq n/2+2\\
4(\beta-n/2-1)& \text{if}\quad n/2+2\leq\beta\leq n+1\\ 
2(\beta-1) & \text{if}\quad n+1\leq \beta.
\end{array}\right.\]
\end{theorem}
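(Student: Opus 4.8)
The plan is to compute the spectrum of the operator $L$ on $\R^n$ directly via separation of variables, exploiting the rotational symmetry of the measure $\mu_\beta$. Since $Lf = \omega\Delta f - 2(\beta-1)\langle df, x\rangle$ with $\omega = 1+|x|^2$, the operator commutes with rotations, so I would decompose $\L^2(\mu_\beta)$ into the orthogonal sum of spaces of the form $\varphi(r) Y_k(\theta)$, where $Y_k$ is a spherical harmonic of degree $k$ on $S^{n-1}$ (an eigenfunction of the spherical Laplacian with eigenvalue $k(k+n-2)$) and $r = |x|$. On each such sector the eigenvalue problem $-Lf = \lambda f$ reduces to a one-dimensional singular Sturm–Liouville ODE in the radial variable $r$.

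The key steps, in order, are as follows. First, I would write out the Laplacian in polar coordinates and substitute $f = \varphi(r)Y_k$ to obtain the radial ODE
\[
-(1+r^2)\left(\varphi'' + \frac{n-1}{r}\varphi' - \frac{k(k+n-2)}{r^2}\varphi\right) + 2(\beta-1)r\varphi' = \lambda\varphi.
\]
Second, I would seek polynomial (or, more precisely, appropriately regular at $0$ and $L^2(\mu_\beta)$-integrable at infinity) solutions by trying $\varphi(r) = r^k P(r^2)$ where $P$ is a polynomial in $s = r^2$; substituting transforms the radial ODE into a hypergeometric-type equation in $s$ whose polynomial solutions exist precisely for a discrete set of $\lambda$. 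Third, for each degree $k$ and each admissible polynomial degree $m$ I would read off the eigenvalue as an explicit quadratic-in-degree expression, of the schematic form $\lambda_{k,m} = (2m+k)\bigl(2(\beta-1) - (2m+k) - (n-2)\bigr) + \text{lower corrections}$, and verify the boundary integrability condition that selects genuine eigenfunctions in $\L^2(\mu_\beta)$. Fourth, I would minimise over the nonzero indices $(k,m) \neq (0,0)$ to identify $\lambda_1(-L)$, tracking which index attains the minimum as $\beta$ varies. The three regimes in the $n\geq 2$ case (and the two regimes for $n=1$) correspond exactly to different minimising pairs: the first-degree harmonic $k=1$, the lowest radial mode $k=0,m=1$, and a crossover, with the transition points $\beta = n/2+2$ and $\beta = n+1$ arising from equating the competing eigenvalue expressions.

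The main obstacle I anticipate is twofold and both parts are analytic rather than algebraic. The first is the integrability/self-adjointness bookkeeping at the singular endpoint $r=\infty$: because of the heavy tails, only finitely many hypergeometric solutions lie in $\L^2(\mu_\beta)$ for small $\beta$, and in the low-$\beta$ regime the continuous spectrum (essential spectrum) can appear. One must show that the discrete eigenvalue $(\beta-n/2)^2$ actually sits below the bottom of the essential spectrum in the range $n/2 < \beta \leq n/2+2$, which is why that branch is quadratic in $\beta$ rather than linear and why it is the value $(\beta - n/2)^2$ (an endpoint/threshold value) that governs the gap there. The second obstacle is purely combinatorial but delicate: carefully comparing the countable family $\{\lambda_{k,m}\}$ to prove that the minimiser is exactly as claimed in each sub-range and that no other mode undercuts it at the boundaries. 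I would handle the first by an explicit Weyl-type analysis of the radial operator's limit-point/limit-circle behaviour at infinity, and the second by reducing the comparison to a finite check after showing $\lambda_{k,m}$ is increasing in $m$ for fixed $k$ and eventually increasing in $k$.
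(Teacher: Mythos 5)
Your route is not the paper's. The paper proves the lower bounds by an integrated curvature-dimension criterion --- the variance representation formula of Proposition \ref{prop:PI} combined with integration-by-parts identities for $\int_M\Gamma_2\, d\mu_\beta$ and range-dependent factorisations --- and the upper bounds by exhibiting eigenfunctions and explicit trial functions (Corollary \ref{cor:uppbound}, Proposition \ref{prop:1dopt}). What you propose is essentially the original proof of \cite{DMc}, which the paper cites as the source of the statement and deliberately sets out to replace, precisely because the spherical-harmonics decomposition is specific to generalised Cauchy measures. A correct execution of your plan would therefore be a genuinely different (indeed, the historically first) route; but as written it contains a real gap in the lower range.

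You describe $(\beta-n/2)^2$ as a ``discrete eigenvalue'' that must be shown to ``sit below the bottom of the essential spectrum'' for $n/2<\beta\leq n/2+2$. This is backwards: in that range $(\beta-n/2)^2$ \emph{is} the infimum of the essential spectrum, and it is not an eigenvalue --- no eigenfunction in $\L^2(\mu_\beta)$ attains it. (The paper makes this explicit: \cite{BBDGV} interpret this value as the bottom of the continuous spectrum, and the strictly positive deficit in Corollary \ref{cor:low} shows that no extremal function exists.) Consequently the hypergeometric/polynomial ansatz of your second and third steps can never produce this value; polynomial modes only yield $2(\beta-1)$, $4(\beta-n/2-1)$ and higher analogues. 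Worse, for $n/2<\beta\leq n/2+1$ there are no polynomial eigenfunctions at all, since even linear functions fail to lie in $\L^2(\mu_\beta)$, so your fourth step's minimisation over $(k,m)\neq(0,0)$ runs over an empty set. What must actually be proved in the lower range is the reverse of what you state: first identify the essential-spectrum threshold as $(\beta-n/2)^2$ via the Weyl analysis you mention, and then show that no spherical-harmonic sector carries a nonzero eigenvalue strictly below it --- for instance, the only polynomial candidate there, $2(\beta-1)$ (present only for $\beta>n/2+1$), satisfies $2(\beta-1)-(\beta-n/2)^2=(n-1)-(\beta-n/2-1)^2\geq n-2\geq0$. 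For the same reason, in the middle and upper ranges your minimisation must include the essential-spectrum threshold as a competitor alongside the discrete modes; there one checks $2(\beta-1)\leq(\beta-n/2)^2$ for $\beta\geq n+1$, which at the endpoint $\beta=n+1$ reduces to $(n-2)^2\geq0$. Once these points are repaired your plan becomes the Denzler--McCann proof and does work, but the repair is the essential analytic content in the lower range, not bookkeeping.
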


By doing so, we obtain some general Poincaré inequalities for a large class of measures. Also, we obtain an estimation of the error term. In the case of generalised Cauchy measures, this estimation is exact and sufficiently explicit to characterise the associated extremal functions, when they exist. Our method is inspired by the integrated curvature-dimension criterion. This criterion has been firstly introduced by \cite{Oba}, for the Laplace operator on compact Riemannian manifolds. The generalisation of this criterion thank to the carré du champ formalism was introduced by \cite{BE}. and developed by \cite{Led5}. An improvement of this criterion was proposed by \cite{ChL}, with an explicit expression of the error term, or deficit, in Poincaré inequality. This improvement allows to characterise extremal functions.

Let us describe the structure of this article. In Section \ref{Sec:var} we present the integrated curvature-dimension criterion and its link to weighted Poincaré inequalities. In Section \ref{Sec:opt}, we obtain some upper bound of the spectral gap for generalised Cauchy measures by studying its eigenfunctions. In Section \ref{Sec:wPI}, we prove weighted Poincaré inequalities for the measures $d\mu \sim \omega^{-\beta}$, where $\omega$ is smooth, positive and strongly convex. These inequalities will be satisfied under assumptions which need to be discussed. We apply the results to generalised Cauchy measures so as to obtain the exact spectral gap and the extremal functions. Finally, in Section \ref{Sec:1d}, we return to the one-dimensional case, as a toy application of our method. This section can be read directly after Section \ref{Sec:var}, as it exposes most of our arguments in a simpler way.

\section{Variance representation formula}
\label{Sec:var}
Let $M$ be a complete Riemannian manifold of dimension $n$. Let $\mu$ be a probability measure on $M$ and $L$ a reversible diffusion operator associated with $\mu$. We denote by $\BP$ the $\L^2$-semigroup generated by $L$. We assume that the semigroup~$\BP$ is ergodic. Let $\Gamma$ and $\Gamma_2$ be the carré du champ and iterated carré du champ  operators, respectively defined on $f\in\CC^\infty_c(M)$ by
\[ 2\Gamma(f) = L(f^2)-2fLf\quad\text{and}\quad 2\Gamma_2(f) = L\Gamma(f)-2\Gamma(f,Lf).\]

Under suitable conditions, discussed in \cite{BGL}, these operators can be uniquely extended to the domain $\DD(L)\subset\L^2(\mu)$. The link between these operators and  functional inequalities satisfied by the measure $\mu$ has been studied since the work of \cite{BE}. The operator $L$ satisfies the curvature-dimension criterion $CD(\rho,\infty)$, also called Bakry-\'Emery criterion, with $\rho\in\R$, if
\begin{equation}\label{BE}
\Gamma_2\geq \rho\Gamma .
\end{equation}
Moreover, if $\rho>0$, then under the $CD(\rho,\infty)$ criterion, the associated measure $\mu$ satisfies a Poincaré-type inequality with constant $\rho$:
\begin{equation}\label{eq:PI}
\rho\Var_\mu(f)\leq \int_M\Gamma(f)\, d\mu.
\end{equation}
For the operator $L=\Delta - \nabla V\cdot\nabla$ associated with $d\mu = e^{-V} dx$, the carré du champ operator is $\Gamma(f) = |df|^2$ and \eqref{eq:PI} is the classical Poincaré inequality. However, for a general $L$, this inequality is a weighted Poincaré inequality. Actually, both Poincaré inequality and weighted Poincaré inequality are equivalent to a spectral gap property on the generator $L$. We denote by $\lambda_1(-L)$ this spectral gap: 
\[\lambda_1(-L) = \inf\left\{\frac{\int_M-fLf\, d\mu}{\Var_\mu(f)}, f\in\DD(-L)\right\}.\]

Actually, curvature-dimension criterion implies stronger functional inequalities than Poincaré-type inequalities, such as logarithmic-Sobolev inequalities. In \cite{Led5}, it was proved in $\R^n$ that the classical Poincaré inequality is equivalent to an integrated version of the Bakry-\'Emery criterion. This result is very general and has been adapted to a larger class of diffusion and space. See \cite{BGL} or \cite{CG} for more details. Here, we recall a  version of this result from \cite{ChL}, where the deficit term is characterised.  

\begin{prop}[\cite{ChL}]\label{prop:PI}
For all $f\in\CC^\infty_c(M)$ and for all $\rho\neq0$ we have
\[\Var_\mu(f) = \frac{1}{\rho}\int_M\Gamma(f)d\mu -\frac{2}{\rho}\int_0^{+\infty}\int_M(\Gamma_2-\rho\Gamma)(\BP_tf)\, d\mu\, dt.\]
In particular, $\lambda_1(-L)$ is the largest $\rho>0$ such that 
$\int_M (\Gamma_2-\rho\Gamma)\, d\mu \geq 0.$
\end{prop}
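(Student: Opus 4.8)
The plan is to prove the identity by \emph{semigroup interpolation} along the flow $t\mapsto\BP_t f$, differentiating the relevant functionals twice and reading off the boundary terms. Fix $f\in\CC^\infty_c(M)$ and put $\phi(t)=\int_M(\BP_t f)^2\,d\mu$ and $\psi(t)=\int_M\Gamma(\BP_t f)\,d\mu$. Using $\frac{d}{dt}\BP_t f=L\BP_t f$ and the integration-by-parts rule $\int_M g\,Lh\,d\mu=-\int_M\Gamma(g,h)\,d\mu$ (valid on $\DD(L)$ under the conditions of \cite{BGL}), I would first compute $\phi'(t)=2\int_M\BP_t f\,L\BP_t f\,d\mu=-2\psi(t)$. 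Ergodicity of $\BP$ gives $\BP_t f\to\int_M f\,d\mu$ in $\L^2(\mu)$, so $\phi(0)-\phi(\infty)=\Var_\mu(f)$, and integrating recovers the classical variance representation $\Var_\mu(f)=2\int_0^{+\infty}\psi(t)\,dt$.

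The second step differentiates $\psi$. From $2\Gamma_2(g)=L\Gamma(g)-2\Gamma(g,Lg)$ together with $\frac{d}{dt}\Gamma(\BP_t f)=2\Gamma(\BP_t f,L\BP_t f)$ and the invariance $\int_M L(\cdot)\,d\mu=0$, I obtain $\psi'(t)=-2\int_M\Gamma_2(\BP_t f)\,d\mu$. A further integration by parts yields the key identity $\int_M\Gamma_2(g)\,d\mu=\int_M(Lg)^2\,d\mu\geq0$, so $\psi$ is nonincreasing; being also nonnegative and integrable, it satisfies $\psi(+\infty)=0$, while $\psi(0)=\int_M\Gamma(f)\,d\mu$. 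It then remains to assemble, for any $\rho\neq0$,
\[\frac{2}{\rho}\int_0^{+\infty}\!\!\int_M(\Gamma_2-\rho\Gamma)(\BP_t f)\,d\mu\,dt=-\frac{1}{\rho}\int_0^{+\infty}\psi'(t)\,dt-2\int_0^{+\infty}\psi(t)\,dt=\frac{1}{\rho}\int_M\Gamma(f)\,d\mu-\Var_\mu(f),\]
which is the announced formula after rearrangement.

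For the \emph{in particular} statement I would argue in two directions. If $\rho>0$ satisfies $\int_M(\Gamma_2-\rho\Gamma)(g)\,d\mu\geq0$ for all $g$, then applying this to $g=\BP_t f$ makes the double integral nonnegative, and the formula forces $\rho\,\Var_\mu(f)\leq\int_M\Gamma(f)\,d\mu$, i.e. the Poincaré-type inequality \eqref{eq:PI} with constant $\rho$; by the variational definition of $\lambda_1(-L)$ this gives $\rho\leq\lambda_1(-L)$, so every admissible $\rho$ is bounded by the spectral gap. For the reverse, I would show $\rho=\lambda_1(-L)$ is itself admissible: using $\int_M\Gamma_2(g)\,d\mu=\int_M(Lg)^2\,d\mu$ and $\int_M\Gamma(g)\,d\mu=-\int_M gLg\,d\mu$ and reducing to $\int_M g\,d\mu=0$ (both functionals ignore additive constants), the quantity $\int_M(\Gamma_2-\lambda_1\Gamma)(g)\,d\mu$ decomposes along the spectral resolution of $-L$ as $\int\lambda(\lambda-\lambda_1(-L))\,d\nu_g(\lambda)$ over the spectrum, which meets $(0,+\infty)$ only in $[\lambda_1(-L),+\infty)$ and is therefore nonnegative. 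Hence $\lambda_1(-L)$ is the largest admissible value.

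The principal difficulty is analytic rather than algebraic. The integrability $\int_0^{+\infty}\psi(t)\,dt=\tfrac12\Var_\mu(f)<\infty$ is automatic, but the differentiations under the integral sign, the integrations by parts on the non-compact manifold $M$, and the extension from $\CC^\infty_c(M)$ to $\DD(L)$ (so that $\BP_t f$ is a legitimate argument) all demand the regularity and completeness hypotheses invoked via \cite{BGL}; identifying $\phi(+\infty)$ and $\psi(+\infty)$ is the place where ergodic convergence of $\BP_t$ enters. A secondary care point is the converse spectral step, where one must pass between the quadratic-form inequality stated on $\CC^\infty_c(M)$ and the operator inequality on $\DD(-L)$ through a density and closability argument.
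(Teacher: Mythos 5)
Your proof of the representation formula follows the paper's own route: both arguments interpolate along $t\mapsto \BP_tf$, compute the first derivative $-2\int_M\Gamma(\BP_tf)\,d\mu$ and the second derivative $4\int_M\Gamma_2(\BP_tf)\,d\mu$ of $\int_M(\BP_tf)^2\,d\mu$, and read off the boundary terms; your explicit justification that $\psi(+\infty)=0$ (via $\int_M\Gamma_2(g)\,d\mu=\int_M(Lg)^2\,d\mu\geq0$, monotonicity and integrability) is a point the paper leaves implicit, and it is needed there too, so this is a genuine improvement in rigour. The forward half of the \emph{in particular} statement is also identical. Where you diverge is the converse, namely that $\rho=\lambda_1(-L)$ itself satisfies the integrated criterion. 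The paper does this with one Cauchy--Schwarz inequality at the level of quadratic forms on $\CC_c^\infty(M)$:
\[\int_M\Gamma(f)\,d\mu=\int_M\left(f-\int_Mf\,d\mu\right)(-L)f\,d\mu\leq \Var_\mu(f)^{1/2}\left(\int_M\Gamma_2(f)\,d\mu\right)^{1/2},\]
which yields $\int_M\Gamma(f)\,d\mu\big/\Var_\mu(f)\leq \int_M\Gamma_2(f)\,d\mu\big/\int_M\Gamma(f)\,d\mu$ and hence $\lambda_1(-L)\leq\inf_f \int_M\Gamma_2(f)\,d\mu\big/\int_M\Gamma(f)\,d\mu$, with no spectral machinery. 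You instead pass to the spectral resolution of a self-adjoint realisation of $-L$ and write $\int_M(\Gamma_2-\lambda_1\Gamma)(g)\,d\mu=\int\lambda(\lambda-\lambda_1(-L))\,d\nu_g(\lambda)\geq0$ for mean-zero $g$. Both are correct, but your route needs more hypotheses: it requires identifying the variational $\lambda_1(-L)$ with the bottom of the spectrum of the self-adjoint extension on the orthogonal complement of constants, and transferring the form inequality from $\CC_c^\infty(M)$ through the closure --- in effect essential self-adjointness, which the paper invokes only later (for the discussion of extremal functions), not for this proposition. What your argument buys in exchange is transparency: it exhibits the integrated criterion at $\rho=\lambda_1(-L)$ as exactly the nonnegativity of $\lambda(\lambda-\lambda_1)$ on the spectrum, and it makes clear why the criterion must fail for any $\rho>\lambda_1(-L)$. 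In this general setting the paper's Cauchy--Schwarz step is the more economical and self-contained choice, so you should either adopt it or state the self-adjointness assumption explicitly where you use the spectral theorem.
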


\begin{proof}
Let us define $h(t) = \int_M (\BP_tf)^2\, d\mu$. We have
\[\Var_\mu(f)= -\left[h_t\right]_0^{+\infty} = \frac{-1}{2\rho}h'(0) -\frac{1}{\rho}\int_0^{+\infty}\frac{1}{2}h''(t)+\rho h'(t)\, dt.\]

Then, we compute the first derivatives of $h$. 
\[h'(t)= \int_M \partial_t (\BP_tf)^2\, d\mu = \int_M 2\BP_tf L\BP_tf\, d\mu= -2\int_M \Gamma(\BP_tf)\, d\mu,\]

\[h''(t) = -2 \int_M \partial_t\Gamma(\BP_tf)\, d\mu= -4\int_M \Gamma(\BP_tf, L\BP_tf)\, d\mu= 4\int_M \Gamma_2(\BP_tf)\, d\mu.\]

This proves the formula. Besides, if $L$ satisfies an integrated Bakry-\'Emery criterion, with constant $\rho>0$, then it is clear that $\lambda_1(-L)\geq\rho$. Conversely, using Cauchy-Schwarz inequality, we have
\[\int_M\Gamma(f)\, d\mu = \int_M \left(f-\int_M f\, d\mu\right)(-L)f\, d\mu \leq \Var_\mu(f)^{1/2}\left(\int_M\Gamma_2(f)\, d\mu\right)^{1/2}.\]

Hence, we have
\[\lambda_1(-L)\leq \inf_{f\in\CC_c^\infty(M)}\frac{\int_M\Gamma_2(f)\, d\mu}{\int_M\Gamma(f)\, d\mu}.\]
\end{proof}

Let us remark that this formula can be extended to other $\Phi$-entropy inequalities, such as logarithmic-Sobolev inequality, using
\[h(t) = \int_M \Phi(\BP_tf)\, d\mu.\]

The error term does not have an explicit expression because of the dependence on $\BP f$. Yet, it is sufficiently explicit to provide a necessary condition for extremal function, up to regularity considerations, or at least, a heuristic to find them. To do so, we firstly need to extend the error formula to the domain $\DD(L)$. This can be done if $L$ is essentially self-adjoint and if it satisfies any $CD(\rho,\infty)$ criterion for $\rho\in\R$ (for instance, see \cite{BGL}). Then, we also need some argument to prove the convergence of $\BP_t f$ as $t$ converges to $0$. Under these considerations, if $L$ satisfies an integrated curvature-dimension criterion with $\rho>0$ and if $f\in\DD(L)$ is extremal for the associated Poincaré inequality, then we have 
\[\int_M(\Gamma_2-\rho\Gamma)(f)\, d\mu = 0.\] 
This characterisation was used in \cite{ChL} on the Ornstein-Uhlenbeck operator to recover the extremal functions in Poincaré and logarithmic-Sobolev inequalities for the Gaussian measures.

Afterwards, we use Proposition \ref{prop:PI} to determine weighted Poincaré inequalities. In the case of generalised Cauchy measures, the expression of the error term is sufficiently explicit to characterise the extremal functions or to prove that there do not exist any extremal functions. 

\section{Spectral gap upper bound}
\label{Sec:opt}
The goal of this section is to introduce generalised Cauchy measures and recall the spectral gap upper bounds for generalised Cauchy measures These upper bounds were already obtained in \cite{BJM} for all parameters. Nevertheless, the study of eigenfunctions associated with these bounds, or the lack of eigenfunctions, will be a very useful clue in Section \ref{Sec:wPI} so as to choose the appropriate integration by parts formulae. The study of the upper bound highlights three different ranges of the parameter $\beta$, on which the spectral gap has three different behaviours. 

We denote by $\omega$ the function defined on $\R^n$ by $\omega(x) = 1+|x|^2$. The generalised Cauchy measure $\mu_\beta$ is the probability measure defined on $\R^n$, for $\beta>n/2$, by 
\[d\mu_\beta =\frac{1}{Z_{n,\beta}} \omega^{-\beta} dx,\quad\text{with}\quad Z_{n,\beta}=\frac{\pi^{n/2}\Gamma(\beta-n/2)}{\Gamma(\beta)}.\] 

This probability measure is associated with the symmetric operator, defined on $\CC_c^{\infty}(\R^n)$ by
\[Lf = \omega \Delta f -2(\beta-1)\langle x, df\rangle.\]
The carré du champ operator associated with $L$ is then
\[\Gamma(f) = \omega|df|^2.\]

The operator $L$ is essentially self-adjoint and can be extended in a unique operator, still denoted by $L$ defined on a domain $\DD(L)\subset\L^2(\mu_\beta)$. The spectrum of this operator was completely described in \cite{DMc} with a decomposition on spherical harmonics. Here, we are interested in its spectral gap and in the associated eigenfunctions only.

\begin{prop}
Let be $n\geq1$ and $\beta> n/2+1$. For all $v\in\R^n$, the linear function $x\mapsto\langle v,x\rangle$ belongs to $\L^2(\mu_\beta)$ and is an eigenfunction of $-L$ associated with the eigenvalue $2(\beta-1)$. In particular, for all $\beta> n/2+1$, we have \[\lambda(-L) \leq 2(\beta-1).\]

For $\beta>n/2+2$, the function $x\mapsto|x|^2-\frac{n}{2\beta-n-2}$ belongs to $\L^2(\mu_\beta)$ and is an eigenfunction associated with the eigenvalue $4(\beta-\frac{n}{2}-1)$. In particular, for all~$\beta> n/2+2$, we have
\[\lambda(-L)\leq 4\left(\beta-\frac{n}{2}-1\right).\]
\end{prop}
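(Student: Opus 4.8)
The plan is to exhibit the two candidate functions explicitly as eigenfunctions by a direct computation with $L$, to check the $\L^2(\mu_\beta)$ integrability thresholds via the tail behaviour of $\omega^{-\beta}$, and then to deduce the spectral gap bounds from the variational characterisation of $\lambda_1(-L)$.

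First I would settle integrability. Writing the relevant integrals in polar coordinates and using that $\omega(x)^{-\beta}=(1+|x|^2)^{-\beta}$ is comparable to $|x|^{-2\beta}$ as $|x|\to\infty$, a smooth function growing like $|x|^{k}$ lies in $\L^2(\mu_\beta)$ exactly when $\int_1^{+\infty} r^{2k}\,r^{-2\beta}\,r^{n-1}\,dr<+\infty$, i.e. when $2\beta>n+2k$. For the linear function ($k=1$) this is $\beta>n/2+1$, and for the quadratic function ($k=2$) this is $\beta>n/2+2$, which are precisely the stated thresholds; the measure itself is finite since $\beta>n/2$.

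Next I would compute $L$ on each candidate. For $f(x)=\langle v,x\rangle$ the gradient $df=v$ is constant, so $\Delta f=0$ and $\langle x,df\rangle=\langle v,x\rangle=f$, whence $Lf=\omega\cdot 0-2(\beta-1)f=-2(\beta-1)f$, giving the eigenvalue $2(\beta-1)$. For $g(x)=|x|^2-c$ one has $dg=2x$, $\Delta g=2n$ and $\langle x,dg\rangle=2|x|^2$, so
\[ Lg = 2n\,\omega - 4(\beta-1)|x|^2 = 2n - \bigl(4(\beta-1)-2n\bigr)|x|^2. \]
Matching this against $-\lambda(|x|^2-c)$ forces $\lambda=4(\beta-1)-2n=4(\beta-n/2-1)$ from the coefficient of $|x|^2$, and then $c=2n/\lambda=n/(2\beta-n-2)$ from the constant term, which is exactly the constant in the statement; hence $-Lg=4(\beta-n/2-1)\,g$. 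To pass from ``eigenfunction'' to ``upper bound'', note that for any eigenfunction $\phi$ of $-L$ with eigenvalue $\lambda\neq0$, the symmetry of $L$ together with $L1=0$ yields $\lambda\int_{\R^n}\phi\,d\mu_\beta=\int_{\R^n}(-L\phi)\,d\mu_\beta=\langle\phi,-L1\rangle=0$, so $\phi$ has zero mean and $\Var_{\mu_\beta}(\phi)=\int_{\R^n}\phi^2\,d\mu_\beta$. Consequently the Rayleigh quotient $\int_{\R^n}(-\phi L\phi)\,d\mu_\beta/\Var_{\mu_\beta}(\phi)$ equals $\lambda$ exactly, and the infimum defining $\lambda_1(-L)$ is at most $\lambda$. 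Applying this to the two eigenfunctions gives $\lambda_1(-L)\le 2(\beta-1)$ for $\beta>n/2+1$ and $\lambda_1(-L)\le 4(\beta-n/2-1)$ for $\beta>n/2+2$.

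The only genuinely delicate point is the domain bookkeeping: one must ensure the candidate functions lie in $\DD(-L)$ before invoking the variational formula. Since $L$ is essentially self-adjoint on $\CC_c^\infty(\R^n)$, its closure coincides with $L^*$, whose domain is $\{\phi\in\L^2(\mu_\beta):L\phi\in\L^2(\mu_\beta)\}$, the image being taken in the distributional sense. As each candidate $\phi$ is smooth with $L\phi=-\lambda\phi\in\L^2(\mu_\beta)$, it belongs to this domain, Green's formula against compactly supported test functions supplying the required symmetry with no boundary contribution. I expect this membership check, rather than any of the algebra above, to be the main (and only mildly technical) obstacle.
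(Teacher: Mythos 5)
Your proposal is correct, and it supplies exactly the verification the paper leaves implicit: the paper states this proposition without proof (recalling these upper bounds from earlier work), and the expected argument is precisely your direct computation — the tail estimate giving the integrability thresholds $\beta>n/2+1$ and $\beta>n/2+2$, the pointwise identities $-L\langle v,\cdot\rangle = 2(\beta-1)\langle v,\cdot\rangle$ and $-L\bigl(|x|^2-\tfrac{n}{2\beta-n-2}\bigr)=4(\beta-\tfrac{n}{2}-1)\bigl(|x|^2-\tfrac{n}{2\beta-n-2}\bigr)$, and the evaluation of the Rayleigh quotient at a zero-mean eigenfunction. Your handling of the domain question via essential self-adjointness (which the paper asserts for this operator) is a sound and welcome addition rather than a deviation.
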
 

For the smallest $\beta$, the spectral gap is not an eigenvalue any longer. In \cite{BBDGV}, it is interpreted as the bottom of the continuous spectrum. We give a new proof of this optimality based on the one-dimensional argument from \cite{BJM2}, recalled in Proposition \ref{prop:1dopt}. The sequence of functions exhibited in their proof is peculiar to the dimension $n=1$ and needs some adaptation to the $n$-dimensional case. It leads to the following result:

\begin{prop}
For $n\geq2$, $n/2<\beta\geq n/2+2$ and $\varepsilon<(2\beta-n)/4$ the functions $f_\varepsilon : x\mapsto \omega^\varepsilon(x)$ belongs to $\DD(L)$ and we have
\[\lim_{\varepsilon\uparrow(2\beta-n)/4} \frac{\int_\R\Gamma(f_\varepsilon)\,d\mu_\beta}{\Var_{\mu_\beta}(f_\varepsilon)}=\left(\beta-\frac{n}{2}\right)^2.\]
In particular, for all $\beta >n/2$, we have
\[\lambda(-L)\leq\left(\beta-\frac{n}{2}\right)^2.\]
\end{prop}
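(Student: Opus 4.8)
The plan is to test the Rayleigh quotient directly on the explicit family $f_\varepsilon=\omega^\varepsilon$ and let $\varepsilon$ approach the integrability threshold $(2\beta-n)/4$. Since
\[\lambda(-L)=\inf\left\{\frac{\int_{\R^n}\Gamma(f)\,d\mu_\beta}{\Var_{\mu_\beta}(f)},\ f\in\DD(L)\right\},\]
the announced upper bound will follow at once from the value of the limit. The first step is to reduce both numerator and denominator to moments of $\omega$. Setting $m_s=\int_{\R^n}\omega^s\,d\mu_\beta$, a direct differentiation gives $df_\varepsilon=2\varepsilon\,\omega^{\varepsilon-1}x$, whence, using $|x|^2=\omega-1$,
\[\Gamma(f_\varepsilon)=\omega|df_\varepsilon|^2=4\varepsilon^2\omega^{2\varepsilon-1}|x|^2=4\varepsilon^2\left(\omega^{2\varepsilon}-\omega^{2\varepsilon-1}\right).\]
Consequently $\int_{\R^n}\Gamma(f_\varepsilon)\,d\mu_\beta=4\varepsilon^2(m_{2\varepsilon}-m_{2\varepsilon-1})$ and $\Var_{\mu_\beta}(f_\varepsilon)=m_{2\varepsilon}-m_\varepsilon^2$, so that the quotient to study is
\[\frac{\int_{\R^n}\Gamma(f_\varepsilon)\,d\mu_\beta}{\Var_{\mu_\beta}(f_\varepsilon)}=4\varepsilon^2\,\frac{m_{2\varepsilon}-m_{2\varepsilon-1}}{m_{2\varepsilon}-m_\varepsilon^2}.\]

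The second step is to evaluate these moments. The change of variable in the normalisation constant gives $m_s=Z_{n,\beta-s}/Z_{n,\beta}=\Gamma(\beta)\Gamma(\beta-s-n/2)\big/\big(\Gamma(\beta-n/2)\Gamma(\beta-s)\big)$, valid exactly when $s<\beta-n/2$. This pins down the integrability range: $f_\varepsilon\in\L^2(\mu_\beta)$ iff $2\varepsilon<\beta-n/2$, i.e. $\varepsilon<(2\beta-n)/4$. The same threshold handles the domain: a direct computation shows that $Lf_\varepsilon$ is an explicit linear combination of $\omega^\varepsilon$ and $\omega^{\varepsilon-1}$, so $(Lf_\varepsilon)^2$ is controlled by $\omega^{2\varepsilon}$ and $Lf_\varepsilon\in\L^2(\mu_\beta)$ under the very same condition; since $L$ is essentially self-adjoint, this yields $f_\varepsilon\in\DD(L)$ for all $\varepsilon<(2\beta-n)/4$.

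The final step is to pass to the limit $\varepsilon\uparrow(2\beta-n)/4$, equivalently $2\varepsilon\uparrow\beta-n/2$. Then the factor $\Gamma(\beta-2\varepsilon-n/2)$ diverges, so $m_{2\varepsilon}\to+\infty$, while $m_\varepsilon$ and $m_{2\varepsilon-1}$ converge to finite limits, their thresholds remaining strictly below $\beta-n/2$ (for $m_{2\varepsilon-1}$ this is because $\beta-(2\varepsilon-1)-n/2\to1>0$, and for $m_\varepsilon$ because $(2\beta-n)/4<\beta-n/2$). Dividing numerator and denominator by $m_{2\varepsilon}$ sends both $m_{2\varepsilon-1}/m_{2\varepsilon}$ and $m_\varepsilon^2/m_{2\varepsilon}$ to $0$, leaving
\[\lim_{\varepsilon\uparrow(2\beta-n)/4}4\varepsilon^2\,\frac{m_{2\varepsilon}-m_{2\varepsilon-1}}{m_{2\varepsilon}-m_\varepsilon^2}=4\left(\frac{2\beta-n}{4}\right)^2=\left(\beta-\frac{n}{2}\right)^2.\]
The inequality $\lambda(-L)\leq(\beta-n/2)^2$ then holds for every $\beta>n/2$, since this family is admissible in that whole range.

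The computations are entirely routine; the only point that genuinely needs care is the bookkeeping of which moments blow up and which stay bounded as $\varepsilon$ reaches the critical value, together with the verification that $f_\varepsilon$ lies in $\DD(L)$ rather than merely in $\L^2(\mu_\beta)$. I expect this domain check and the control of the subleading moment $m_{2\varepsilon-1}$ to be the only delicate ingredients; everything else is an application of the Gamma-function moment formula.
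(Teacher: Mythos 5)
Your proof is correct and takes essentially the same route as the paper's: the paper likewise tests the Rayleigh quotient on the family $f_\varepsilon=\omega^\varepsilon$ and lets $\varepsilon\uparrow(2\beta-n)/4$, the key point in both arguments being that $\int_{\R^n} f_\varepsilon^2\,d\mu_\beta$ blows up at the threshold while the lower-order moments ($\int_{\R^n} f_\varepsilon\,d\mu_\beta$, resp.\ your $m_\varepsilon$ and $m_{2\varepsilon-1}$) stay bounded; the only cosmetic difference is that the paper evaluates the numerator through the quasi-eigenvalue identity for $Lf_\varepsilon$ rather than through your explicit Gamma-function moment formulas. Your verification that $f_\varepsilon\in\DD(L)$ (via $Lf_\varepsilon\in\L^2(\mu_\beta)$ and essential self-adjointness) is in fact slightly more careful than the paper's, which only checks $\L^2$ membership.
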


\begin{proof}
Firstly, we remark that $f_\varepsilon\in\L^2(\mu_\beta)$ if and only if $\varepsilon<(2\beta-n)/4$ but at the limit we still have $f_{(2\beta-n)/4}\in\L^1(\mu_\beta)$. So, we have 
\[\frac{\Var_{\mu_\beta}(f_\varepsilon)}{\int_{\R^n}f_\varepsilon^2\, d\mu_\beta} = 1-\frac{\left(\int_{\R^n}f_\varepsilon\, d\mu_\beta\right)^2}{\int_{\R^n}f_\varepsilon^2\, d\mu_\beta}\underset{\varepsilon\to(2\beta-n)/4}{\longrightarrow}1.\]

Then, for all $\varepsilon<(2\beta-n)/4$ and for all $x\in\R^n$, we have 
\begin{align*}
Lf_\varepsilon (x)
&=\varepsilon\omega^{\varepsilon-1}(x)L\omega(x) +\varepsilon(\varepsilon-1)\omega^{\varepsilon-2}(x)\Gamma(\omega)(x)\\
&=\varepsilon\omega^{\varepsilon-1}(x)\left[2n\omega(x)-4(\beta-1)|x|^2\right]+\varepsilon(\varepsilon-1)\omega^{\varepsilon-2}(x)\left[4\omega(x)|x|^2\right]\\
&= \varepsilon\left[2n-4(\beta-\varepsilon)\right]f_\varepsilon(x) +4(\beta-\varepsilon).
\end{align*}

Thus,
\[
\frac{\int_{\R^n}-f_\varepsilon Lf_\varepsilon\, d\mu_\beta}{\int_{\R^n}f^2_\varepsilon\, d\mu_\beta} 
= \varepsilon\left[4(\beta-\varepsilon)-2n\right] -4(\beta-\varepsilon)\frac{\int_{\R^n}f_\varepsilon(x)\, d\mu_\beta}{\int_{\R^n}f^2_\varepsilon\, d\mu_\beta}.
\]

So, we have
\[\lim_{\varepsilon\uparrow(2\beta-n)/4} \frac{\int_\R\Gamma(f_\varepsilon)\,d\mu_\beta}{\Var_{\mu_\beta}(f_\varepsilon)} =\lim_{\varepsilon\uparrow(2\beta-n)/4} \frac{\int_\R-f_\varepsilon Lf_\varepsilon\,d\mu_\beta}{\int_{\R^n}f^2_\varepsilon\, d\mu_\beta} = \left(\beta-\frac{n}{2}\right)^2.\]
\end{proof} 

Hence, we have obtained three different upper bounds for the spectral gap, on different intersecting ranges. After optimising, we get the global result.

\begin{coro}[Cauchy upper bounds]\label{cor:uppbound}
For $n\geq2$, we have the following spectral gap upper bound: 
\[\lambda_1(-L) \leq\left\{
\begin{array}{ll} 
(\beta-n/2)^2 & \text{if}\quad n/2<\beta\leq n/2+2\\
4(\beta-n/2-1)& \text{if}\quad n/2+2\leq\beta\leq n+1\\ 
2(\beta-1) & \text{if}\quad n+1\leq \beta.
\end{array}\right.\]
\end{coro}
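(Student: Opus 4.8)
The plan is to read off the corollary as the pointwise minimum of the three upper bounds already established, namely $A(\beta):=(\beta-n/2)^2$ (valid for every $\beta>n/2$ by the sequence $f_\varepsilon=\omega^\varepsilon$), $B(\beta):=4(\beta-n/2-1)$ (valid for $\beta>n/2+2$ via the quadratic eigenfunction), and $C(\beta):=2(\beta-1)$ (valid for $\beta>n/2+1$ via the linear eigenfunctions). Since each of these is a genuine upper bound for $\lambda_1(-L)$ wherever it is defined, $\lambda_1(-L)$ is bounded above by the smallest of those that are defined at a given $\beta$, and the whole content of the statement is to identify that minimum explicitly on each range.

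First I would carry out the three pairwise comparisons, which become transparent after the substitution $u=\beta-n/2$. One computes
\[A-B=u^2-4(u-1)=(u-2)^2\ge 0,\qquad B-C=2(\beta-n-1),\qquad A-C=(u-1)^2-(n-1).\]
The first identity shows that $B\le A$ on the whole range $\beta>n/2+2$ where $B$ is defined, with equality exactly at the junction $\beta=n/2+2$. The second shows $B\le C$ precisely when $\beta\le n+1$, which pins down the junction $\beta=n+1$.

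Then I would assemble the three regimes. On $(n/2,n/2+2]$ only $A$ --- and, for $\beta>n/2+1$, also $C$ --- is available; here $A\le C$ because $(u-1)^2\le 1\le n-1$ for $u\in(1,2]$ and $n\ge2$, so the bound is $A=(\beta-n/2)^2$. On $[n/2+2,n+1]$ all three are available and $B\le A$, $B\le C$, so the bound is $B=4(\beta-n/2-1)$; the left endpoint $\beta=n/2+2$ is covered by the equality $A=B$, even though the quadratic eigenfunction itself requires strict inequality. On $[n+1,\infty)$ one has $C\le B$ and, since $u-1\ge n/2$ forces $(u-1)^2\ge n^2/4\ge n-1$, also $C\le A$, so the bound is $C=2(\beta-1)$. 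The three pieces agree at the junctions ($A=B=4$ at $\beta=n/2+2$ and $B=C=2n$ at $\beta=n+1$), so the piecewise function is continuous and well defined.

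There is no real obstacle here: the argument is pure bookkeeping of which of three explicit functions is smallest on each subinterval, the only mildly delicate points being the elementary inequality $(u-1)^2\le n-1$ on $(1,2]$ (which is exactly where the hypothesis $n\ge2$ enters) and the need to respect the domains of validity of the two eigenfunction bounds at the boundary $\beta=n/2+2$.
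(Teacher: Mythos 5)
Your proposal is correct and follows the same route as the paper: the paper derives exactly these three upper bounds (from $f_\varepsilon=\omega^\varepsilon$, the quadratic eigenfunction, and the linear eigenfunctions) and then simply states ``after optimising, we get the global result,'' which is precisely the pairwise comparison you carry out explicitly. Your bookkeeping --- including the substitution $u=\beta-n/2$, the identity $A-B=(u-2)^2$, and the observation that the endpoint $\beta=n/2+2$ is covered by $A=B$ since the quadratic eigenfunction requires strict inequality --- is a correct and slightly more careful write-up of the step the paper leaves implicit.
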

\section{Weighted Poincaré inequalities}
\label{Sec:wPI}
Let $\omega$ be a smooth positive function on $M$. We assume that there exists $\beta>0$ such that the measure with density $\omega^{-\beta}$ with respect to the Riemannian volume measure, denoted $\mu_\beta$, is a probability measure. Up to additional assumptions on~$\omega$ we will be able to determine on which range of $\beta$ this measure assumption is satisfied.

The goal of this  section is to obtain general weighted Poincaré inequalities for the probability $\mu_\beta$, under additional assumptions on $\omega$ such as strong convexity or bounded convexity. These inequalities will be sharp in the sense that for generalised Cauchy measures, the weight and the constant are optimal. The main innovations of this section is the Poincaré inequality on a range of  small parameters $\beta\leq n+1$, beyond the criterion of convex measures. In particular, when applied to generalised Cauchy measures, the integrated curvature-dimension approach allows to recover the exact spectral gap even for $n/2<\beta\leq n+1$.

The measure $\mu_\beta$ is associated with the symmetric diffusion operator, defined on $\CC^\infty_c(M)$ by
\[Lf = \omega \Delta f -(\beta-1)\langle d\omega, df\rangle.\]
This special operator is chosen such that its carré du champ gives the weight $\omega$ in the Poincaré-type inequalities. Indeed, we have the following result:

\begin{prop}\label{prop:gamma}
For all $f\in\CC_c^\infty(M)$, we have
\begin{align*}
\Gamma(f) =& \omega|df|^2,\\
\Gamma_2(f) =& \|\omega\Hess(f)\|^2_{HS} +\omega^2\Ric(\nabla f,\nabla f) +\frac{1}{2}\left[\omega\Delta\omega-(\beta-1)|d\omega|^2\right]|df|^2\\
&\quad +\langle d|df|^2, \omega d\omega\rangle - \langle\Delta f df, \omega d\omega\rangle + (\beta-1)\omega\Hess(\omega)(\nabla f, \nabla f).
\end{align*}
\end{prop}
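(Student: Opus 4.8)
The plan is to compute both identities directly from the definitions of the carré du champ operators, exploiting the Leibniz-type rules for $\Delta$ and the Bochner--Weitzenböck formula to dispose of the third-order derivatives. For the first identity I would start from $2\Gamma(f) = L(f^2) - 2fLf$. Since $\Delta(f^2) = 2f\Delta f + 2|df|^2$ and $\langle d\omega, d(f^2)\rangle = 2f\langle d\omega, df\rangle$, a one-line expansion gives $L(f^2) = 2fLf + 2\omega|df|^2$, whence $\Gamma(f) = \omega|df|^2$. This also records that the associated bilinear form is $\Gamma(f,g) = \omega\langle df, dg\rangle$, which I will need below.

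For $\Gamma_2$ I would use $2\Gamma_2(f) = L\Gamma(f) - 2\Gamma(f, Lf)$ and treat the two terms separately. Writing $g = |df|^2$, the first term $L(\omega g)$ expands by the Leibniz rule for $\Delta$, namely $\Delta(\omega g) = g\Delta\omega + 2\langle d\omega, dg\rangle + \omega\Delta g$, together with $\langle d\omega, d(\omega g)\rangle = g|d\omega|^2 + \omega\langle d\omega, dg\rangle$. This produces, among other terms, a contribution $\omega^2\Delta g$ coming from the pure Laplacian part.

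The second term is the crux. Since $\Gamma(f, Lf) = \omega\langle df, d(Lf)\rangle$, I must differentiate $Lf = \omega\Delta f - (\beta-1)\langle d\omega, df\rangle$ and pair with $df$. The third-order piece $\omega\langle df, d\Delta f\rangle$ is exactly where the curvature enters: applying Bochner's formula $\tfrac12\Delta|df|^2 = \|\Hess f\|_{HS}^2 + \langle df, d\Delta f\rangle + \Ric(\nabla f, \nabla f)$ replaces it by $\tfrac12\omega\Delta g - \omega\|\Hess f\|_{HS}^2 - \omega\Ric(\nabla f, \nabla f)$, and after multiplying by $2\omega$ the resulting $\omega^2\Delta g$ cancels precisely against the one coming from $L(\omega g)$. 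For the drift piece I would use $d\langle\nabla\omega, \nabla f\rangle = \Hess\omega(\nabla f, \cdot) + \Hess f(\nabla\omega, \cdot)$, so that pairing with $df$ yields $\Hess\omega(\nabla f,\nabla f) + \Hess f(\nabla\omega, \nabla f)$.

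The main obstacle is not any single computation but the bookkeeping that makes the final expression intrinsic, i.e. free of any third-order derivative. The decisive cancellation is the elimination of $\omega^2\Delta|df|^2$ just described; a second, more delicate one uses the identity $\langle d\omega, dg\rangle = 2\Hess f(\nabla f, \nabla\omega)$, which follows from $\tfrac12\nabla|df|^2 = \Hess f(\nabla f)$, to combine the cross term $2(\beta-1)\omega\Hess f(\nabla\omega, \nabla f)$ with $-(\beta-1)\omega\langle d\omega, dg\rangle$ via the symmetry of the Hessian, leaving a single residual term $\langle d|df|^2, \omega\, d\omega\rangle$ with the stated coefficient. Collecting the survivors and dividing by two then reproduces exactly the claimed formula, with $\|\omega\Hess f\|_{HS}^2 = \omega^2\|\Hess f\|_{HS}^2$ accounting for the first term.
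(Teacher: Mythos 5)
Your proposal is correct and takes essentially the same route as the paper's proof: you expand $2\Gamma_2(f)=L\Gamma(f)-2\Gamma(f,Lf)$ directly via the Leibniz rules, use Bochner's formula to trade the third-order combination $\omega^2\Delta|df|^2-2\omega^2\langle df,d\Delta f\rangle$ for $2\|\omega\Hess(f)\|^2_{HS}+2\omega^2\Ric(\nabla f,\nabla f)$, and use $\tfrac12 d|df|^2=\Hess(f)(\nabla f,\cdot)$ to cancel the $(\beta-1)$ cross terms. These are precisely the two cancellations in the paper's computation, so there is nothing to add.
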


\begin{proof}
The computation of $\Gamma$ does not present any difficulties. We will only develop the computation of $\Gamma_2$. For $f\in\CC_c^\infty(M)$, we have
\begin{align*}
2\Gamma_2(f)
&= L\Gamma(f)-2\Gamma(f,Lf)\\
&= \omega\Delta(\omega|df|^2) - (\beta-1)\langle d\omega, d(\omega|df|^2)\rangle -2\omega\left\langle df, d(\omega\Delta f-(\beta-1)\langle d\omega, df\rangle)\right\rangle\\
&= \omega\Delta\omega|df|^2 + 2\omega\langle d\omega, d|df|^2\rangle +\omega^2\Delta|df|^2 -(\beta-1)|d\omega|^2|df|^2-(\beta-1)\langle d|df|^2,\omega d\omega\rangle\\
&\quad -2\langle \Delta f df, \omega d\omega\rangle -2\omega^2\langle df, d|df|^2\rangle +2(\beta-1) \omega\left\langle df, d\langle df, d\omega\rangle \right\rangle.
\end{align*} 

Then, we use Bochner's formula for the Laplacian 
\begin{equation}\label{eq:boch}
\Delta |df|^2 -2\langle df, d\Delta f\rangle = 2\|\Hess(f)\|^2_{HS}+2\Ric(\nabla f,\nabla f).
\end{equation}

Together with the remark that $\Hess(f)(\nabla f, \cdot) = \frac{1}{2}d|df|^2$, this ends the proof.
\end{proof} 

Let us note that, as $\omega$ is positive, then the carré du champ cancels on constant functions only. This means that the diffusion is ergodic and Proposition \ref{prop:PI} can be applied.
 
In this general setting, it remains unclear whereas $L$ satisfies or not a curvature-dimension criterion. Yet, in the special case of generalised Cauchy measures, we have a positive answer. First, we specify the $\Gamma_2$ operator associated with generalised Cauchy measures.

\begin{coro}[Cauchy carré du champ]\label{prop:GammaC}
Let $\omega : x\in\R^n\mapsto 1+|x|^2$. For all $f\in\CC^\infty_c(\R^n)$ we have
\begin{align*}
\Gamma(f) &= \omega|df|^2,\\
\Gamma_2(f) &= \|\omega\Hess(f)\|^2_{HS} + [n\omega+2(\beta-1)]|df|^2 + 2\langle d|df|^2, \omega x\rangle -2\langle\Delta f df, \omega x\rangle.
\end{align*}
\end{coro}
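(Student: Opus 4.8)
The plan is to read this corollary directly off Proposition \ref{prop:gamma} by specialising the general weighted $\Gamma_2$ formula to flat Euclidean space $M=\R^n$ equipped with the weight $\omega(x)=1+|x|^2$. The carré du champ $\Gamma(f)=\omega|df|^2$ is already in the stated form, so all of the work lies in evaluating the geometric data entering the six terms of the general $\Gamma_2$ expression and then simplifying.

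First I would record the curvature and the derivatives of the weight. Since the Euclidean metric is flat, $\Ric\equiv 0$, which kills the term $\omega^2\Ric(\nabla f,\nabla f)$. For $\omega=1+|x|^2$ one computes $d\omega=2x$, whence $|d\omega|^2=4|x|^2$ and $\omega\,d\omega=2\omega x$; furthermore $\Delta\omega=2n$ and $\Hess(\omega)$ equals twice the metric, so that $\Hess(\omega)(\nabla f,\nabla f)=2|df|^2$. Substituting $\omega\,d\omega=2\omega x$ into the two inner-product terms immediately turns $\langle d|df|^2,\omega\,d\omega\rangle$ into $2\langle d|df|^2,\omega x\rangle$ and $-\langle\Delta f\,df,\omega\,d\omega\rangle$ into $-2\langle\Delta f\,df,\omega x\rangle$, matching the target, while the Hessian term $\|\omega\Hess(f)\|^2_{HS}$ is carried over untouched.

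The only step that is more than a bookkeeping substitution is collecting the coefficient of $|df|^2$, which combines the two contributions coming from the curvature of the weight, namely the bracketed term $\frac{1}{2}[\omega\Delta\omega-(\beta-1)|d\omega|^2]$ and the convexity term $(\beta-1)\omega\Hess(\omega)(\nabla f,\nabla f)$. Inserting the above values, the coefficient of $|df|^2$ becomes
\[\frac{1}{2}\left[2n\omega-4(\beta-1)|x|^2\right]+2(\beta-1)\omega = n\omega+2(\beta-1)\bigl(\omega-|x|^2\bigr),\]
and the single identity $\omega-|x|^2=1$ collapses this to $n\omega+2(\beta-1)$, which is the claimed coefficient. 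This cancellation is exactly what makes the weight $1+|x|^2$ special: it is the one place where the precise form of $\omega$, rather than only the flatness of the ambient space, is used, and everything else is direct substitution into Proposition \ref{prop:gamma}.
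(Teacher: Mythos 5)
Your proposal is correct and is exactly the argument the paper intends: the corollary is stated without proof as a direct specialisation of Proposition \ref{prop:gamma} to $M=\R^n$ (so $\Ric=0$) with $d\omega=2x$, $\Delta\omega=2n$, $\Hess(\omega)=2\,\id$, and your computation of the $|df|^2$ coefficient, $\frac{1}{2}\left[2n\omega-4(\beta-1)|x|^2\right]+2(\beta-1)\omega=n\omega+2(\beta-1)$, is precisely the cancellation that makes it work.
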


This formula can be factorised so as to obtain a $CD(0,\infty)$ criterion and to prove the optimality of this criterion. This property was already proved in \cite{Hug} in dimension $n=2$.

\begin{prop}[Cauchy $CD$ criterion]\label{prop:cd0}
Let $\omega : x\in\R^n\mapsto 1+|x|^2$. For $n\geq 1$, $L$ satisfies $CD(0,\infty)$. Moreover, for every $\rho>0$, it does not satisfy $CD(\rho,\infty)$.
\end{prop}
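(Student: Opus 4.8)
The plan is to prove the two assertions of Proposition~\ref{prop:cd0} separately, using the explicit expression of $\Gamma_2$ from Corollary~\ref{prop:GammaC}. For the $CD(0,\infty)$ part, I would aim to rewrite the three potentially problematic terms in $\Gamma_2$ as a perfect square (up to terms that are manifestly nonnegative). Recall from the paper that $\Hess(f)(\nabla f,\cdot) = \tfrac12 d|df|^2$, so the cross terms involving $d|df|^2$ and $\Delta f\, df$ are really Hessian contractions against the position vector field $x$. Concretely, I expect
\[
2\langle d|df|^2, \omega x\rangle - 2\langle \Delta f\, df, \omega x\rangle = 4\omega\,\Hess(f)(\nabla f, x) - 2\omega\,\Delta f\,\langle df, x\rangle,
\]
and the strategy is to absorb these into the Hilbert--Schmidt term $\|\omega\Hess(f)\|_{HS}^2$ by completing the square. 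The natural candidate is to compare with $\|\omega\Hess(f) + x\otimes df + df\otimes x - \lambda\, \mathrm{Id}\|_{HS}^2$ or a similar rank-one correction, choosing the correction so that the linear-in-$\Hess(f)$ terms match. The leftover after completing the square, combined with the nonnegative term $[n\omega + 2(\beta-1)]|df|^2$, should be a manifestly nonnegative quadratic form in $df$, which yields $\Gamma_2 \geq 0$.

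The main obstacle will be the bookkeeping of this completion of square: one must verify that the quadratic residue in $|df|^2$ and $\langle df, x\rangle^2$ that remains after subtracting the perfect square is nonnegative for \emph{all} $\beta > n/2$. I would write $\Hess(f) = H$ and treat the problem pointwise at a fixed point $x$, so that $H$ is an arbitrary symmetric matrix, $u = df$ an arbitrary vector, and the inequality becomes a statement about the quadratic form
\[
\|\omega H\|_{HS}^2 + [n\omega + 2(\beta-1)]|u|^2 + 4\omega\, H(u,x) - 2\omega\,(\mathrm{tr}\,H)\langle u, x\rangle \geq 0.
\]
The Cauchy--Schwarz-type step $|H(u,x)| \leq \|H\|_{HS}\,|u|\,|x|$ and $|(\mathrm{tr}\,H)\langle u,x\rangle| \leq \sqrt{n}\,\|H\|_{HS}\,|u|\,|x|$ reduce this to a two-variable quadratic in $\|H\|_{HS}$ and $|u|$, whose discriminant condition should be exactly where the relation $|x|^2 = \omega - 1$ and the positivity of $n\omega + 2(\beta-1)$ enter to close the argument. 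The subtle point is that the correct grouping must use the extra slack coming from $2(\beta-1)|u|^2$ and the term $n\omega|u|^2$ rather than a crude bound, so I would be careful to keep track of which part of the coefficient of $|u|^2$ is consumed.

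For the optimality assertion (no $CD(\rho,\infty)$ for $\rho>0$), the plan is to exhibit a family of test functions for which $(\Gamma_2 - \rho\Gamma)(f)$ becomes negative somewhere, for any fixed $\rho>0$. The natural choice is to probe the behaviour at infinity, where $\Gamma(f) = \omega|df|^2$ grows but the curvature contribution degenerates. Taking an affine function $f(x) = \langle v, x\rangle$ (so that $\Hess(f) = 0$ and the Hessian and cross terms vanish) gives $\Gamma(f) = \omega|v|^2$ and $\Gamma_2(f) = [n\omega + 2(\beta-1)]|v|^2$, hence
\[
(\Gamma_2 - \rho\Gamma)(f) = \big[(n-\rho)\omega + 2(\beta-1)\big]|v|^2.
\]
For any $\rho>0$ I would then choose a localisation: since the sign of $(n-\rho)\omega + 2(\beta-1)$ is governed by $\omega = 1+|x|^2 \to \infty$, if $\rho > n$ the bracket is eventually negative as $|x|\to\infty$. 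For $0 < \rho \leq n$ the affine test function alone does not suffice, so I expect to perturb it by a small quadratic correction, or equivalently test a function whose Hessian is a small multiple of the identity, to introduce a controllable negative contribution from the cross terms at large $|x|$; optimising the perturbation parameter produces a point where $\Gamma_2 - \rho\Gamma < 0$. The hard part here is handling the regime $0<\rho\leq n$ cleanly, and I would present it by choosing $f$ concentrated near a far-away point so that the dominant balance is between the $-\rho\omega|df|^2$ term and the bounded curvature contribution, making negativity transparent.
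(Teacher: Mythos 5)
Your plan for the $CD(0,\infty)$ part is, in its first formulation, exactly the paper's proof: complete the square with the \emph{symmetric} correction, i.e.\ compare with
\[
\left\|\omega\Hess(f)+x\otimes\nabla f+\nabla f\otimes x-\langle df,x\rangle\id\right\|_{HS}^2 ,
\]
whose cross terms reproduce $4\omega\Hess(f)(\nabla f,x)-2\omega\Delta f\langle df,x\rangle$ exactly; since
\[
\left\|x\otimes\nabla f+\nabla f\otimes x-\langle df,x\rangle\id\right\|_{HS}^2=2|x|^2|df|^2+(n-2)\langle df,x\rangle^2,
\]
subtracting it and adding $[n\omega+2(\beta-1)]|df|^2$ with $\omega=1+|x|^2$ leaves the manifestly nonnegative residue $(n-2)\left[|x|^2|df|^2-\langle df,x\rangle^2\right]+(2\beta+n-2)|df|^2$. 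However, the verification you then sketch --- bounding $|H(u,x)|\le\|H\|_{HS}|u||x|$ and $|\Trace H\,\langle u,x\rangle|\le\sqrt n\,\|H\|_{HS}|u||x|$ and checking a discriminant --- cannot close: the discriminant condition reads $(4+2\sqrt n)^2|x|^2\le 4n\omega+8(\beta-1)$, and since $(4+2\sqrt n)^2>4n$ it fails for $|x|$ large, however you allocate the slack in the $|u|^2$ coefficient. The cancellation must be kept as an exact algebraic identity, not estimated; this is precisely the point of the paper's remark that even the exact square with only the asymmetric correction $x\otimes\nabla f-\langle df,x\rangle\id$ would not suffice.

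The genuine gap is in the optimality part. Affine functions rule out $CD(\rho,\infty)$ only for $\rho>n$, as you note, but your proposed repair --- perturbing by a Hessian that is a small multiple of the identity --- cannot reach small $\rho$. Writing $\omega_0=1+|x_0|^2$ and taking $\nabla f(x_0)=v$ parallel to $x_0$, $\Hess f=\varepsilon\id$ near $x_0$, one gets at $x_0$
\[
(\Gamma_2-\rho\Gamma)(f)=n\varepsilon^2\omega_0^2+(4-2n)\varepsilon\omega_0\langle v,x_0\rangle+\left[(n-\rho)\omega_0+2(\beta-1)\right]|v|^2 ,
\]
and minimising over $\varepsilon$ (with the favourable sign) leaves
\[
|v|^2\left[(n-\rho)\omega_0+2(\beta-1)-\frac{(n-2)^2}{n}|x_0|^2\right],
\]
whose coefficient of $|x_0|^2$ equals $\frac{(4-\rho)n-4}{n}$, which is nonnegative whenever $\rho\le 4-4/n$ (for $n=2$ the cross term even vanishes identically). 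So for, say, $\rho=1$, no such test function is negative anywhere, and the regime $0<\rho\le 4-4/n$ remains untouched. The missing idea is that negativity requires the Hessian to cancel the rank-one correction inside the perfect square, $\omega\Hess f\approx-\left(x\otimes\nabla f+\nabla f\otimes x-\langle df,x\rangle\id\right)$, which forces a logarithmic radial profile: the paper takes $f=\frac12\ln(|x|^2)$ times a cutoff equal to $1$ near a far point, for which $\Gamma(f)=1+1/|x|^2\to1$ while $\Gamma_2(f)=n/|x|^4+(2\beta+n-2)/|x|^2\to0$, so $\Gamma_2<\rho\Gamma$ at a far enough point for every fixed $\rho>0$. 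Note also that your closing heuristic is off: the term $[n\omega+2(\beta-1)]|df|^2\ge n\,\Gamma(f)$ is not a bounded curvature contribution, and it is only the negative cross terms generated by the log profile that cancel it.
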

\begin{proof}
The idea of this proof is to factorise every second-order derivative terms as the squared Hilbert-Schmidt norm of a more intricate operator. For all $f\in\CC_c^\infty(\R^n)$, we have
\begin{align*}
\Gamma_2(f) 
&= \|\omega\Hess(f)\|^2_{HS}  + 2\langle d|df|^2, \omega x\rangle -2\langle\Delta f df, \omega x\rangle + [n\omega+2(\beta-1)]|df|^2\\
&= \|\omega\Hess(f)\|^2_{HS}  + 4\omega\Hess(f)(\nabla f, x) -2\omega \Trace(\Hess(f))\langle df, x\rangle + [n\omega+2(\beta-1)]|df|^2\\
&= \left\|\omega\Hess(f) + x\otimes \nabla f +\nabla f\otimes x - \langle df, x\rangle\id\right\|^2_{HS}-\left\|x\otimes \nabla f +\nabla f\otimes x - \langle df, x\rangle\id\right\|^2_{HS}\\
&\quad+ [n\omega+2(\beta-1)]|df|^2\\
&= \left\|\omega\Hess(f) + x\otimes \nabla f +\nabla f\otimes x - \langle df, x\rangle\id\right\|^2_{HS} +(n-2)\left[|df|^2|x|^2-\langle df, x\rangle^2\right]\\
&\quad  + (2\beta+n-2)|df|^2.\\
\end{align*}
Let us notice that in dimension $n=1$ the second term vanishes. Then, these three terms are non-negative and so $L$ satisfies the $CD(0,\infty)$ criterion. 

The optimality comes from the comparison with the special function $f : x\in\R^n\mapsto 1/2 \ln(|x|^2)\xi(x)$, where $\xi$ is a  cutoff function, which is equal to $1$ on a neighbourhood of  $x_0\in\R^n$, such that $f$ belongs to $\CC^\infty_c(\R^n)$. For every $x$ on this neighbourhood, we have
\[\Gamma(f)(x) = 1+\frac{1}{|x|^2},\quad \Gamma_2f(x) = \frac{n}{|x|^4} + \frac{2\beta+n-2}{|x|^2}.\]
Choosing $|x_0|$ large enough, it follows that for every $\rho>0$, there exists a function $f\in\CC_c^\infty(\R^n)$ and $x\in\R^n$ such that $\Gamma_2(f)(x) <\rho\Gamma(f)(x)$.
\end{proof}

Let us remark that the symmetric term in the Hilbert-Schmidt norm is necessary to obtain this optimal result. The same computation with a term 
\[\|\omega\Hess(f) + x\otimes\nabla f-\langle df, x\rangle\id\|^2_{HS},\]
would not have been sufficient. This phenomenon will be encountered again while dealing with the lower range of $\beta$. Also, this proof highly relies on the special form of $\omega$ and its generalisation to other $\omega$ is not trivial.

Thus, for generalised Cauchy measures at least, we cannot obtain any $CD(\rho,\infty)$ criterion for positive $\rho$. This is a reason why we are interested in integrated criterion. Before going further, here are the following integration by parts formulae:

\begin{lemme}
For all $f\in\CC_c^\infty(M)$, we have
\begin{equation}\label{eq:IPP1}
\int_M\langle d|df|^2, \omega d\omega\rangle\, d\mu_\beta = \int_M \left[-\omega\Delta\omega +(\beta-1)|d\omega|^2\right]|df|^2\, d\mu_\beta,
\end{equation}
\begin{equation}\label{eq:IPP2}
\begin{split}
\int_M \langle\Delta f df, \omega d\omega\rangle\, d\mu_\beta =& \int_M -\frac{1}{2}\langle d|df|^2, \omega d\omega\rangle -\omega\Hess(\omega)(\nabla f,\nabla f)\\
&\quad +(\beta-1)\langle df, d\omega\rangle^2\, d\mu_\beta.
\end{split}
\end{equation}
Furthermore, if $\beta\neq2$, we have
\begin{equation}\label{eq:IPP3}
\int_M\langle d|df|^2, \omega d\omega\rangle\, d\mu_\beta = \frac{1}{\beta-2} \int_M \omega^2\Delta|df|^2\, d\mu_\beta,
\end{equation}
\begin{equation}\label{eq:IPP4}
\int_M \langle\Delta f df, \omega d\omega\rangle\, d\mu_\beta = \frac{1}{\beta-2}\int_M \omega^2\langle df, d\Delta f\rangle + (\omega\Delta f)^2\, d\mu_\beta.
\end{equation}
\end{lemme}

\begin{proof}
For the first two formulae, we lower the differentiation degree on $f$. For all $f\in\CC_c^\infty(M)$, we have
\begin{align*}
\int_M\langle d|df|^2, \omega d\omega\rangle\, d\mu_\beta
=& \int_M \langle d|df|^2, \omega^{1-\beta} d\omega\rangle\, d\vol\\
=& -\int_M |df|^2\Div(\omega^{1-\beta} d\omega)\, d\vol\\
=& -\int_M |df|^2\left[\omega^{1-\beta} \Div(d\omega)+(1-\beta)\omega^{-\beta}|d\omega|^2\right]\, d\vol\\
=& \int_M \left[-\omega\Delta\omega +(\beta-1)|d\omega|^2\right]|df|^2\, d\mu_\beta.
\end{align*}

\begin{align*}
\int_M \langle\Delta f df, \omega d\omega\rangle\, d\mu_\beta
=& \int_M \Delta f\langle df, \omega^{1-\beta} d\omega\rangle\, d\vol\\
=& -\int_M \left\langle df, d\langle df, \omega^{1-\beta} d\omega\rangle\right\rangle\, d\vol\\
=& -\int_M \omega^{1-\beta}\Hess(f)(\nabla f, \nabla \omega) + \omega^{1-\beta}\Hess(\omega)(\nabla f, \nabla f)\\
&\quad +(1-\beta)\omega^{-\beta}\langle df, d\omega\rangle^2\, d\vol\\
=& \int_M -\frac{1}{2}\langle d|df|^2, \omega d\omega\rangle -\omega\Hess(\omega)(\nabla f,\nabla f)\\
&\quad +(\beta-1)\langle df, d\omega\rangle^2\, d\mu_\beta.
\end{align*}

Now, for $\beta\neq2$, we can also increase the differentiation degree on $f$. Then, we have

\begin{align*}
\int_M\langle d|df|^2, \omega d\omega\rangle\, d\mu_\beta 
=&\int_M\langle d|df|^2, \omega^{1-\beta} d\omega\rangle\, d\vol\\ 
=&\int_M\langle d|df|^2, \frac{d\omega^{2-\beta}}{2-\beta} \rangle\, d\vol\\ 
=&-\int_M\Delta|df|^2 \frac{\omega^{2-\beta}}{2-\beta} \rangle\, d\vol\\ 
=& \frac{1}{\beta-2} \int_M \omega^2\Delta|df|^2\, d\mu_\beta.
\end{align*}

\begin{align*}
\int_M \langle\Delta f df, \omega d\omega\rangle\, d\mu_\beta 
=& \int_M \langle\Delta f df, \omega^{1-\beta} d\omega\rangle\, d\vol\\
=& \int_M \langle\Delta f df, \frac{d\omega^{2-\beta}}{2-\beta}\rangle\, d\vol\\
=& -\int_M\Div(\Delta f df)\frac{d\omega^{2-\beta}}{2-\beta}\, d\vol\\
=& -\int_M\left[\langle d\Delta f, df\rangle+ \Delta f\Div(df)\right]\frac{d\omega^{2-\beta}}{2-\beta}\, d\vol\\
=& \frac{1}{\beta-2}\int_M \omega^2\langle df, d\Delta f\rangle + (\omega\Delta f)^2\, d\mu_\beta.
\end{align*}
\end{proof}

These formulae allow us to establish a first simplification.

\begin{lemme}\label{prop:gammabis}
For all $f\in\CC_c^\infty(M)$, we have
\begin{align*}
\int_M \Gamma_2(f)\, d\mu_\beta
=& \int_M \|\omega\Hess(f)\|^2_{HS} +\omega^2\Ric(\nabla f,\nabla f) + (\beta-1)\omega\Hess(\omega)(\nabla f, \nabla f)\\
&\quad +\frac{1}{2}\langle d|df|^2, \omega d\omega\rangle - \langle\Delta f df, \omega d\omega\rangle\, d\mu_\beta .
\end{align*}
\end{lemme}

\begin{proof}
\begin{align*}
\int_M \Gamma_2(f)\, d\mu_\beta
=& \int_M \|\omega\Hess(f)\|^2_{HS} +\omega^2\Ric(\nabla f,\nabla f) + (\beta-1)\omega\Hess(\omega)(\nabla f, \nabla f)\\
&\quad +\frac{1}{2}\left[\omega\Delta\omega-(\beta-1)|d\omega|^2\right]|df|^2 +\frac{1}{2}\underbrace{\langle d|df|^2, \omega d\omega\rangle}_{(i)}\\
&\quad +\frac{1}{2}\langle d|df|^2, \omega d\omega\rangle - \langle\Delta f df, \omega d\omega\rangle\, d\mu_\beta.
\end{align*}
The result is achieved by using the formula \eqref{eq:IPP1} on the term $(i)$.
\end{proof}

This last formula for $\int_M \Gamma_2\, d\mu_\beta$ can be divided into four parts. The first one is a squared norm of the Hessian and is non-negative. The second one, gathering the Ricci tensor and the $\Hess(\omega)$, is a kind of Bakry-\'Emery curvature, upon which, we will probably need some additional assumption. The last two terms are more problematical. In the sequel, we will deal with these terms using appropriate combinations of our integration by parts formulae, depending on the parameter $\beta$.

\subsection{Strong convexity}
Firstly, we recover a general weighted Poincaré inequality. It was proved in the Euclidean space $\R^n$ in \cite{Ngu} (Theorem 12) and in Riemannian manifolds with non-negative Ricci curvature in \cite{GZ}. This classical result is only valid for convex measures with $\beta\geq n+1$ and under strong convexity property. Here, we complete this result and provide a criterion for general Riemannian manifolds.

\begin{prop}\label{prop:grg}
For all $\beta\neq2$ and $f\in\CC^\infty_c(M)$, we have
\begin{align*}
\int_M\Gamma_2(f)\, d\mu_\beta = \int_M& \frac{\beta-(n+1)}{\beta-2}\|\omega\Hess(f)\|^2_{HS}+ \frac{n}{\beta-2}\left[\|\omega\Hess(f)\|^2_{HS}-\frac{1}{n}(\omega\Delta f)^2\right]\\ 
&+(\beta-1)\omega\left[\frac{1}{\beta-2}\omega\Ric +\Hess(\omega)\right](\nabla f,\nabla f)\, d\mu_\beta.
\end{align*}
\end{prop}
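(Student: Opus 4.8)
The plan is to start from the variance-free formula of Lemma~\ref{prop:gammabis} and to eliminate the two remaining first-order-in-$\omega$ terms, namely $\tfrac{1}{2}\langle d|df|^2,\omega d\omega\rangle$ and $-\langle\Delta f\, df,\omega d\omega\rangle$, by raising the differentiation degree on $f$. Since we work under the hypothesis $\beta\neq2$, the integration by parts formulae \eqref{eq:IPP3} and \eqref{eq:IPP4} are available, and they are precisely the tools that convert these terms into expressions involving $\omega^2\Delta|df|^2$, $\omega^2\langle df,d\Delta f\rangle$ and $(\omega\Delta f)^2$.

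Concretely, I would substitute \eqref{eq:IPP3} into the $\tfrac{1}{2}\langle d|df|^2,\omega d\omega\rangle$ term and \eqref{eq:IPP4} into the $-\langle\Delta f\, df,\omega d\omega\rangle$ term. This produces the combination
\[
\frac{1}{\beta-2}\int_M \omega^2\left[\tfrac{1}{2}\Delta|df|^2-\langle df,d\Delta f\rangle\right]-(\omega\Delta f)^2\, d\mu_\beta.
\]
Bochner's formula \eqref{eq:boch} identifies the bracket as $\|\Hess(f)\|^2_{HS}+\Ric(\nabla f,\nabla f)$, so, using $\omega^2\|\Hess(f)\|^2_{HS}=\|\omega\Hess(f)\|^2_{HS}$, the two problematic terms collapse to $\frac{1}{\beta-2}\int_M\|\omega\Hess(f)\|^2_{HS}+\omega^2\Ric(\nabla f,\nabla f)-(\omega\Delta f)^2\, d\mu_\beta$.

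It then remains to add this to the three terms already present in Lemma~\ref{prop:gammabis} and to collect coefficients. The Hessian term acquires coefficient $1+\frac{1}{\beta-2}=\frac{\beta-1}{\beta-2}$, the Ricci term the same coefficient, the $\Hess(\omega)$ term keeps its coefficient $\beta-1$, and $-\frac{1}{\beta-2}(\omega\Delta f)^2$ is new. The only genuine step is a cosmetic regrouping: I would split $\frac{\beta-1}{\beta-2}\|\omega\Hess(f)\|^2_{HS}$ as $\frac{\beta-(n+1)}{\beta-2}\|\omega\Hess(f)\|^2_{HS}+\frac{n}{\beta-2}\|\omega\Hess(f)\|^2_{HS}$ so that the last piece pairs with the $(\omega\Delta f)^2$ contribution into $\frac{n}{\beta-2}[\|\omega\Hess(f)\|^2_{HS}-\frac{1}{n}(\omega\Delta f)^2]$, and factor $\frac{\beta-1}{\beta-2}\omega^2\Ric+(\beta-1)\omega\Hess(\omega)$ as $(\beta-1)\omega[\frac{1}{\beta-2}\omega\Ric+\Hess(\omega)]$, which is exactly the claimed identity.

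I do not anticipate a real obstacle, since the computation is an exact substitution; the only point requiring care is the bookkeeping of the factors of $\omega$ and of the sign of $\beta-2$. The particular form chosen is, however, not arbitrary: the combination $\|\omega\Hess(f)\|^2_{HS}-\frac{1}{n}(\omega\Delta f)^2$ is non-negative by Cauchy--Schwarz (as $(\Delta f)^2=(\Trace\Hess(f))^2\leq n\|\Hess(f)\|^2_{HS}$), so this regrouping is designed to expose the sign of each contribution in view of the Poincaré inequalities derived next.
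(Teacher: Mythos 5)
Your proposal is correct and follows essentially the same route as the paper's own proof: start from Lemma~\ref{prop:gammabis}, apply \eqref{eq:IPP3} and \eqref{eq:IPP4} to the two first-order terms (legitimate since $\beta\neq2$), recognise the resulting bracket via Bochner's formula \eqref{eq:boch}, and regroup the coefficient $\frac{\beta-1}{\beta-2}$ as $\frac{\beta-(n+1)}{\beta-2}+\frac{n}{\beta-2}$ to pair with the $(\omega\Delta f)^2$ term. The bookkeeping of coefficients is exact, so nothing is missing.
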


\begin{proof}
From Lemma \ref{prop:gammabis}, we have
\begin{align*}
\int_M\Gamma_2(f)\, d\mu_\beta =& \int_M \|\omega\Hess(f)\|^2_{HS} +\omega^2\Ric(\nabla f,\nabla f)\\
&\quad+\frac{1}{2}\underbrace{\langle d|df|^2, \omega d\omega\rangle}_{(i)} -\underbrace{\langle\Delta f df, \omega d\omega\rangle}_{(ii)} +(\beta-1)\omega\Hess(\omega)(\nabla f, \nabla f)\\
=&\int_M \|\omega\Hess(f)\|^2_{HS} +\omega^2\Ric(\nabla f,\nabla f)+ (\beta-1)\omega\Hess(\omega)(\nabla f, \nabla f) \\
&\quad+\frac{\omega^2}{2(\beta-2)}(\Delta|df|^2-2\langle df, d\Delta|df|^2\rangle)-\frac{\omega^2}{\beta-2}(\Delta f)^2,\\
\end{align*}
where we used the equalities \eqref{eq:IPP3} and \eqref{eq:IPP4} on $(i)$ and $(ii)$ respectively. Using again Bochner's formula \eqref{eq:boch}, we obtain the result.
\end{proof}

Let us note that this expression of $\int_M\Gamma_2(f)\, d\mu$ can be found in \cite{Ngu}, without the use of the carré du champ formalism.

The previous proposition suggests some conditions under which we can obtain a spectral gap inequality. The more important point to understand is the condition upon the Bakry-\'Emery like-curvature. A first assumption can be set as follows. Let us assume that $\beta\neq2$ and that there exists a $c>0$ such that
\begin{equation}\label{eq:bdlow}
\frac{\omega}{\beta-2}\Ric+\Hess(\omega)\geq c \id.\tag{H1}
\end{equation}
This assumption is not very convenient because it depends on $\beta$ and because this dependence is not explicit. Yet, in the Euclidean space, the assumption is equivalent to strong convexity on $\omega$: it exists~${\rho_->0}$ such that 
\begin{equation}\label{eq:bdlow2}
\Hess(\omega)\geq \rho_-  \id.
\end{equation}
Let us remark that under this strong convexity assumption, the measure $d\mu_\beta$ is a probability measure for every~${\beta>n/2}$. The hypothesis \eqref{eq:bdlow2} is also sufficient in a Riemannian manifold with non-negative Ricci curvature. We can now state our first result.

\begin{theorem}\label{prop:ggg}
For $n\geq2$ and $\beta\geq n+1$, under assumption \eqref{eq:bdlow}, we have $\lambda_1(-L)\geq c(\beta-1)$.
For $n=1$ and $\beta>1$, under assumption \eqref{eq:bdlow}, we have $\lambda_1(-L)\geq c(\beta-1)$.
\end{theorem}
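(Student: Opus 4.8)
The plan is to reduce everything to the integrated criterion of Proposition~\ref{prop:PI}: since that proposition identifies $\lambda_1(-L)$ as the largest $\rho>0$ for which $\int_M(\Gamma_2-\rho\Gamma)\,d\mu_\beta\geq0$, it suffices to establish, for every $f\in\CC_c^\infty(M)$, the integrated inequality
\[
\int_M\bigl(\Gamma_2(f)-c(\beta-1)\Gamma(f)\bigr)\,d\mu_\beta\geq0,
\]
with $\Gamma(f)=\omega|df|^2$. The starting point is the exact identity of Proposition~\ref{prop:grg}, which I would read as a sum of three pieces, and the point of the grouping chosen there is precisely that, after subtracting the weight $c(\beta-1)\Gamma(f)=c(\beta-1)\omega|df|^2$, each piece becomes non-negative under the stated hypotheses.

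For $n\geq2$ and $\beta\geq n+1$ I first note that $\beta\geq n+1\geq3>2$, so $\beta\neq2$ and $\beta-2>0$, and Proposition~\ref{prop:grg} applies cleanly. The coefficient $\frac{\beta-(n+1)}{\beta-2}$ of $\|\omega\Hess(f)\|^2_{HS}$ is then non-negative, so the first term drops out harmlessly. For the second term I would invoke the Cauchy--Schwarz (trace) inequality $(\Delta f)^2=(\Trace\Hess(f))^2\leq n\|\Hess(f)\|^2_{HS}$, which gives $\|\omega\Hess(f)\|^2_{HS}-\tfrac1n(\omega\Delta f)^2\geq0$; multiplied by $\frac{n}{\beta-2}>0$ this term is non-negative as well. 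The third term carries the curvature, and here I would use assumption~\eqref{eq:bdlow} directly: since $\frac{\omega}{\beta-2}\Ric+\Hess(\omega)\geq c\,\id$ as a symmetric form and $\omega>0$, $\beta-1>0$, we get
\[
(\beta-1)\,\omega\Bigl[\tfrac{1}{\beta-2}\omega\Ric+\Hess(\omega)\Bigr](\nabla f,\nabla f)\geq c(\beta-1)\,\omega|df|^2=c(\beta-1)\Gamma(f).
\]
Subtracting $c(\beta-1)\Gamma(f)$ thus leaves the sum of the first two non-negative terms, which proves the integrated inequality and hence $\lambda_1(-L)\geq c(\beta-1)$ by Proposition~\ref{prop:PI}.

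For $n=1$ the structure degenerates favourably: the Ricci curvature vanishes, and since $\Hess(f)$ has a single entry one has $\|\Hess(f)\|^2_{HS}=(\Delta f)^2$, so the bracket in the second term of Proposition~\ref{prop:grg} is identically zero and the coefficient $\frac{\beta-(n+1)}{\beta-2}=\frac{\beta-2}{\beta-2}$ of the first term equals $1$. The identity collapses to $\int_M\Gamma_2(f)\,d\mu_\beta=\int_M\|\omega\Hess(f)\|^2_{HS}+(\beta-1)\omega\Hess(\omega)(\nabla f,\nabla f)\,d\mu_\beta$, and assumption~\eqref{eq:bdlow} (which for $n=1$ reads $\Hess(\omega)\geq c\,\id$) again gives $(\beta-1)\omega\Hess(\omega)(\nabla f,\nabla f)\geq c(\beta-1)\Gamma(f)$ whenever $\beta>1$, leaving $\int_M(\Gamma_2-c(\beta-1)\Gamma)\geq\int_M\|\omega\Hess(f)\|^2_{HS}\geq0$.

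The one genuinely delicate point I anticipate is the value $\beta=2$ in the one-dimensional case, since Proposition~\ref{prop:grg} is derived through the integration by parts formulae~\eqref{eq:IPP3}--\eqref{eq:IPP4} that require $\beta\neq2$. I would handle it either by a separate direct computation starting from Lemma~\ref{prop:gammabis} (using only \eqref{eq:IPP1}--\eqref{eq:IPP2}, which are valid for all $\beta$), or by observing that for fixed $f\in\CC_c^\infty(M)$ the quantity $\int_M(\Gamma_2-c(\beta-1)\Gamma)\,d\mu_\beta$ depends continuously on $\beta$, so that the inequality at $\beta=2$ follows by passing to the limit from the neighbouring values. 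Apart from this edge case, the argument is essentially a bookkeeping of signs, the substantive inputs being the trace--Cauchy--Schwarz inequality and the curvature hypothesis~\eqref{eq:bdlow}.
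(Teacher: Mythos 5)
Your proof is correct and, for the main case $n\geq2$, is exactly the paper's argument: Proposition \ref{prop:grg} (licit since $\beta\geq n+1\geq 3>2$), non-negativity of the coefficient $\frac{\beta-(n+1)}{\beta-2}$, the trace Cauchy--Schwarz inequality $\|A\|^2_{HS}\geq\frac{1}{n}(\Trace A)^2$ applied to $A=\omega\Hess(f)$, the curvature hypothesis \eqref{eq:bdlow}, and the conclusion via Proposition \ref{prop:PI}. The only divergence is in dimension one: the paper never routes through Proposition \ref{prop:grg} there, but works directly from Lemma \ref{prop:gammabis}, observing that in dimension one the two problematic terms $\frac{1}{2}\langle d|df|^2,\omega\, d\omega\rangle$ and $\langle\Delta f\, df,\omega\, d\omega\rangle$ coincide pointwise (both equal $\omega\omega'f'f''$) and hence cancel, giving $\int_M\Gamma_2(f)\,d\mu_\beta=\int_M\|\omega\Hess(f)\|^2_{HS}+(\beta-1)\omega\Hess(\omega)(\nabla f,\nabla f)\,d\mu_\beta$ for every $\beta>1$ with no exclusion at $\beta=2$; the delicate point you flag therefore never arises in the paper's proof. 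Both of your patches for $\beta=2$ do work --- the continuity argument is legitimate because for fixed $f\in\CC^\infty_c(M)$ the integrand is affine in $\beta$ and $\omega^{-\beta}$ varies continuously on the compact support of $df$ --- and your first patch (recomputing from Lemma \ref{prop:gammabis} with only the $\beta$-independent formulae) is precisely the paper's route, except that in dimension one not even \eqref{eq:IPP1}--\eqref{eq:IPP2} are needed, since the cancellation is pointwise.
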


\begin{proof}
Let us begin with the general case $n\geq2$. We must remark that for $n\geq 2$ and $\beta\geq n+1$ the assumption $\beta\neq2$ is redundant as we have $\beta>2$. Then, under the assumption \eqref{eq:bdlow} we have proved that
\begin{align*}
\int_M\Gamma_2(f)-c(\beta-1)\Gamma(f)\, d\mu_\beta \geq& \int_M \frac{\beta-(n+1)}{\beta-2}\|\omega\Hess(f)\|^2_{HS}\\
&\quad +\frac{n}{\beta-2}\left[\|\omega\Hess(f)\|^2_{HS}-\frac{1}{n}(\omega\Delta f)^2\right]\, d\mu_\beta.
\end{align*}
The numerical coefficients are non-negative and the Cauchy-Schwarz inequality implies that for all linear operator $A$ acting on the tangent bundle $TM$, we have
\[\|A\|^2_{HS}\geq \frac{1}{n}(\Trace A)^2.\]
Applied to $A=\omega\Hess(f)$, it proves an integrated curvature-dimension criterion. We conclude with Proposition \ref{prop:PI}.

In the particular case $n=1$, the two problematic terms from Lemma \ref{prop:gammabis} offset each other. Under assumption \eqref{eq:bdlow}, we get 
\[\int_M\Gamma_2(f)-c(\beta-1)\Gamma(f)\, d\mu_\beta \geq \int_M \|\omega\Hess(f)\|^2_{HS}\, d\mu_\beta.\]
\end{proof}

In this general case, we only obtain an upper bound of the error term although a lower bound would have been more interesting. In the case of the generalised Cauchy measures, the potential $\omega$ satisfies $\Hess(\omega) = 2\id$. Then we can apply our theorem to get a spectral gap lower bounds together with an exact expression of the error term. 

\begin{coro}[Cauchy - Upper range]\label{cor:up}
Let $\omega : x\in\R^n\mapsto 1+|x|^2$. For all $n\geq2$ and $\beta\geq n+1$, we have~$\lambda_1(-L) \geq 2(\beta-1)$, 
and for all $f\in\CC_c^\infty(\R^n)$, we have
\begin{align*}
&2(\beta-1)\Var_{\mu_\beta}(f) -\int_{\R^n}|df|^2(1+|x|^2)\, d\mu_\beta\\
& = -2\int_0^{+\infty}\int_{\R^n}\frac{\beta-(n+1)}{\beta-2}\|\omega\Hess(\BP_tf)\|^2_{HS} +\frac{n}{\beta-2}\left[\|\Hess(\omega\BP_tf)\|^2_{HS}-\frac{1}{n}(\omega\Delta \BP_tf)^2\right]\, d\mu_\beta\, dt.
\end{align*}
\end{coro}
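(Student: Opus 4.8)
The plan is to specialise the general identity of Proposition~\ref{prop:grg} to the Euclidean manifold $M=\R^n$ with $\omega(x)=1+|x|^2$, and then to substitute the resulting expression into the variance representation formula of Proposition~\ref{prop:PI} with the choice $\rho=2(\beta-1)$.

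First I would record the two simplifications special to generalised Cauchy measures: $\R^n$ is flat, so $\Ric\equiv0$, and a direct computation gives $\Hess(\omega)=2\,\id$. The Bakry-\'Emery-like curvature term in Proposition~\ref{prop:grg} then collapses to
\[(\beta-1)\omega\left[\tfrac{1}{\beta-2}\omega\Ric+\Hess(\omega)\right](\nabla f,\nabla f)=2(\beta-1)\,\omega|df|^2=2(\beta-1)\Gamma(f),\]
so that for every $f\in\CC_c^\infty(\R^n)$ and $\beta\neq2$,
\[\int_{\R^n}\bigl(\Gamma_2-2(\beta-1)\Gamma\bigr)(f)\,d\mu_\beta=\int_{\R^n}\frac{\beta-(n+1)}{\beta-2}\|\omega\Hess(f)\|^2_{HS}+\frac{n}{\beta-2}\left[\|\omega\Hess(f)\|^2_{HS}-\tfrac{1}{n}(\omega\Delta f)^2\right]d\mu_\beta.\]
For $n\geq2$ and $\beta\geq n+1$ one has $\beta>2$, both scalar prefactors are non-negative, and the Cauchy-Schwarz bound $\|A\|^2_{HS}\geq\frac1n(\Trace A)^2$ applied to $A=\omega\Hess(f)$ shows the integrand is non-negative. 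This is precisely the integrated $CD$ criterion, so Proposition~\ref{prop:PI} yields $\lambda_1(-L)\geq2(\beta-1)$ (equivalently, this is Theorem~\ref{prop:ggg} with $c=2$).

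For the exact error term I would invoke Proposition~\ref{prop:PI} with $\rho=2(\beta-1)$, which after multiplying through by $2(\beta-1)$ reads
\[2(\beta-1)\Var_{\mu_\beta}(f)-\int_{\R^n}\Gamma(f)\,d\mu_\beta=-2\int_0^{+\infty}\!\!\int_{\R^n}\bigl(\Gamma_2-2(\beta-1)\Gamma\bigr)(\BP_tf)\,d\mu_\beta\,dt.\]
Using $\Gamma(f)=(1+|x|^2)|df|^2$ on the left and substituting the displayed $\Gamma_2$-identity, evaluated at $\BP_tf$ rather than $f$, into the right-hand side gives exactly the claimed formula (the term written $\|\Hess(\omega\BP_tf)\|^2_{HS}$ should be read as $\|\omega\Hess(\BP_tf)\|^2_{HS}$, consistent with Proposition~\ref{prop:grg}).

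The main obstacle is justifying the insertion of $\BP_tf$ into the $\Gamma_2$-identity: Proposition~\ref{prop:grg} is established for compactly supported smooth functions, whereas $\BP_tf$ is neither compactly supported nor, a priori, regular enough for the integrations by parts \eqref{eq:IPP1}--\eqref{eq:IPP4} underlying its proof. I would handle this by appealing to the essential self-adjointness of $L$ together with the $CD(0,\infty)$ property of Proposition~\ref{prop:cd0}, which guarantee that $\BP_t$ maps $\CC_c^\infty(\R^n)$ into the domain on which $\Gamma_2$ and the integration-by-parts formulae extend by density, and that the $\omega$-weighted derivatives of $\BP_tf$ decay fast enough for every boundary term to vanish and every integral to converge. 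Once this density and decay argument is in place, the identity valid on $\CC_c^\infty(\R^n)$ passes to $\BP_tf$ and the representation formula closes.
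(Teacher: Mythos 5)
Your proposal is correct and follows essentially the same route as the paper: specialising Proposition~\ref{prop:grg} to $\Ric\equiv0$ and $\Hess(\omega)=2\,\id$ so that the curvature term becomes $2(\beta-1)\Gamma(f)$, then feeding the resulting identity into Proposition~\ref{prop:PI} with $\rho=2(\beta-1)$, exactly as the paper does via Theorem~\ref{prop:ggg}. Your two additional observations --- that $\|\Hess(\omega\BP_tf)\|^2_{HS}$ in the statement is a typo for $\|\omega\Hess(\BP_tf)\|^2_{HS}$, and that passing from $\CC_c^\infty(\R^n)$ to $\BP_tf$ rests on essential self-adjointness and the $CD(0,\infty)$ property --- are both consistent with what the paper states in Section~\ref{Sec:var}.
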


As explained in Section \ref{Sec:var}, our expression of the error term characterises the extremal function. For $\beta>n+1$, an extremal function $f$ must satisfy \[\|\Hess(f)\|_{HS}=0.\] Then it is an affine function. This is coherent as, in this range, linear functions are eigenfunctions associated with $2(\beta-1)$. In the particular case $\beta=n+1$, an extremal function $f$ must satisfy  \[|\Hess(f)\|^2_{HS}-\frac{1}{n}(\Delta f)^2.\] Then in this case, $f$ is quadratic. This is coherent as, for this parameter, linear functions and $x\in\R^n\mapsto |x|^2-\frac{n}{2\beta-n-2}$ are eigenfunctions associated with $2n$. 

\subsection{Upper bounded convexity}
To obtain Proposition \ref{prop:grg}, we kept an Hessian term. This term is cancelled by affine functions which are extremal functions for generalised Cauchy measures for $\beta\geq n+1$. However, we have seen that the extremal functions for $\beta\leq n+1$ are no longer linear. This suggests a formula for $\int_M\Gamma_2\, d\mu$ without the Hessian term. On the other hand, the term, specific to the dimension $n\geq2$ 
\[\|\omega\Hess(f)\|^2_{HS}-\frac{1}{n}(\omega\Delta f)^2,\]
is a very good candidate as it is cancelled by the extremal functions for generalised Cauchy measures on this range. This idea provides a Poincaré inequality for general $\omega$ and $\beta\leq n+1$. This result for non-convex measures is an important innovation of our work. Nonetheless, it requires a stronger assumption than the strong convexity. 

\begin{prop}\label{prop:irg}
For all $n\geq2$, $\beta>n/2$ and $f\in\CC^\infty_c(M)$, we have
\begin{align*}
\int_M\Gamma_2(f)\, d\mu_\beta = \int_M  &\frac{n}{n-1}\left[\|\omega\Hess(f)\|^2_{HS}-\frac{1}{n}(\omega\Delta f)^2\right]\\
& +\frac{(n+1-\beta)(\beta-1)}{n-1}\left[|df|^2|d\omega|^2-\langle df,d\omega\rangle^2\right]\\ 
&+\omega\left[\frac{n}{n-1}\omega\Ric+(\beta-1)\Hess(\omega)\right](\nabla f,\nabla f)\\
&+\frac{n+1-\beta}{n-1}\omega\left[\Hess(\omega)-\Delta\omega\id\right](\nabla f,\nabla f)\, d\mu_\beta.
\end{align*}
\end{prop}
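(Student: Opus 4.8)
The guiding idea is to produce an exact expression for $\int_M\Gamma_2(f)\,d\mu_\beta$ in which the only second-order contribution is the trace-free combination $\|\omega\Hess(f)\|^2_{HS}-\frac1n(\omega\Delta f)^2$, a genuine square of the trace-free part of $\omega\Hess(f)$, hence non-negative, and which remains valid across the whole range $\beta>n/2$ — in particular at the degenerate value $\beta=2$, where \eqref{eq:IPP3} and \eqref{eq:IPP4} are unavailable. The plan is to obtain it as an affine combination of two different exact representations of $\int_M\Gamma_2(f)\,d\mu_\beta$: the one coming from Proposition \ref{prop:grg} (which already produces the trace-free term but leaves a leftover pure-trace term and carries poles $1/(\beta-2)$), and a companion one valid for every $\beta$.

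First I would record this companion representation, obtained by lowering the order of differentiation instead of raising it. Starting from Lemma \ref{prop:gammabis} and substituting \eqref{eq:IPP2} for the term $\langle\Delta f\,df,\omega d\omega\rangle$, then \eqref{eq:IPP1} for the resulting $\langle d|df|^2,\omega d\omega\rangle$ contributions, one is led to
\begin{align*}
\int_M\Gamma_2(f)\,d\mu_\beta=\int_M &\|\omega\Hess(f)\|^2_{HS}+\omega^2\Ric(\nabla f,\nabla f)+\beta\,\omega\Hess(\omega)(\nabla f,\nabla f)\\
&-\omega\Delta\omega|df|^2+(\beta-1)\big[|df|^2|d\omega|^2-\langle df,d\omega\rangle^2\big]\,d\mu_\beta,
\end{align*}
which I shall call $(\mathrm I)$. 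Since \eqref{eq:IPP1} and \eqref{eq:IPP2} carry no factor $1/(\beta-2)$, this identity holds for every admissible $\beta$, including $\beta=2$.

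Next I would form the affine combination
\[\frac{\beta-2}{n-1}\,\big[\text{Proposition \ref{prop:grg}}\big]+\frac{n+1-\beta}{n-1}\,\big[(\mathrm I)\big].\]
The two weights sum to $1$, so the right-hand side is again an exact expression for $\int_M\Gamma_2(f)\,d\mu_\beta$; moreover the prefactor $\beta-2$ cancels the poles $1/(\beta-2)$ present in Proposition \ref{prop:grg}, so the combination is regular and the resulting identity is valid for all $\beta>n/2$, $\beta=2$ included (alternatively one proves it for $\beta\neq2$ and extends it by continuity in $\beta$). The weights are chosen precisely so that the leftover pure-trace term is annihilated: writing $\|\omega\Hess(f)\|^2_{HS}=\big[\|\omega\Hess(f)\|^2_{HS}-\frac1n(\omega\Delta f)^2\big]+\frac1n(\omega\Delta f)^2$, the coefficient of $(\omega\Delta f)^2$ is $-\frac{1}{\beta-2}$ in Proposition \ref{prop:grg} and $\frac1n$ in $(\mathrm I)$, and indeed $\frac{\beta-2}{n-1}\cdot\big(-\frac{1}{\beta-2}\big)+\frac{n+1-\beta}{n-1}\cdot\frac1n=0$.

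It then remains to collect the coefficients of the surviving terms and match them with the statement: the trace-free Hessian and the Ricci term both acquire coefficient $\frac{n}{n-1}$; the $\Hess(\omega)$ term acquires $(\beta-1)+\frac{n+1-\beta}{n-1}=\frac{\beta(n-2)+2}{n-1}$; and the gradient contributions $-\omega\Delta\omega|df|^2$ and $(\beta-1)\big[|df|^2|d\omega|^2-\langle df,d\omega\rangle^2\big]$ carried by $(\mathrm I)$, each weighted by $\frac{n+1-\beta}{n-1}$, regroup into the announced $\frac{(n+1-\beta)(\beta-1)}{n-1}\big[|df|^2|d\omega|^2-\langle df,d\omega\rangle^2\big]$ together with the $-\Delta\omega\,\id$ part of $\frac{n+1-\beta}{n-1}\,\omega\big[\Hess(\omega)-\Delta\omega\,\id\big](\nabla f,\nabla f)$. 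The bookkeeping of these coefficients is the only genuine labour here; the two points to watch are the admissibility of the combination at $\beta=2$, secured by the pole cancellation above, and the restriction $n\geq2$, which is exactly what makes the weights $1/(n-1)$ meaningful — consistently with the separate treatment of $n=1$.
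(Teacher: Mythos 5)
Your proposal is correct in substance, and once unwound it is the paper's own proof, repackaged. The paper splits the two problematic terms of Lemma \ref{prop:gammabis} into a fraction $\lambda$ treated by \eqref{eq:IPP1}--\eqref{eq:IPP2} and a fraction $1-\lambda$ treated by \eqref{eq:IPP3}--\eqref{eq:IPP4} plus Bochner, and then chooses $\lambda=\frac{n+1-\beta}{n-1}$, so that $1-\lambda=\frac{\beta-2}{n-1}$. The $\lambda=1$ endpoint of this interpolation is exactly your companion identity $(\mathrm I)$ (which you derive correctly), and the $\lambda=0$ endpoint is Proposition \ref{prop:grg}; the paper's $\lambda$-split is therefore literally your affine combination with weights $\frac{n+1-\beta}{n-1}$ and $\frac{\beta-2}{n-1}$. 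Reusing Proposition \ref{prop:grg} as a black box is a tidy way to organise the bookkeeping, and your final collection of coefficients (trace-free Hessian and Ricci at $\frac{n}{n-1}$, $\Hess(\omega)$ at $\frac{\beta(n-2)+2}{n-1}$, the gradient terms from $(\mathrm I)$) does match the statement.

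Two steps need repair, though. First, your displayed cancellation check is false as written: $\frac{\beta-2}{n-1}\cdot\left(-\frac{1}{\beta-2}\right)+\frac{n+1-\beta}{n-1}\cdot\frac{1}{n}=\frac{1-\beta}{n(n-1)}\neq0$. You mixed two conventions: $-\frac{1}{\beta-2}$ is the \emph{literal} coefficient of $(\omega\Delta f)^2$ in Proposition \ref{prop:grg}, while $\frac{1}{n}$ is the \emph{pure-trace} coefficient of $(\mathrm I)$ after decomposing $\|\omega\Hess(f)\|^2_{HS}$. Done consistently: the total Hessian content of Proposition \ref{prop:grg} is $\frac{\beta-1}{\beta-2}\|\omega\Hess(f)\|^2_{HS}-\frac{1}{\beta-2}(\omega\Delta f)^2$, whose pure-trace part has coefficient $\frac{\beta-(n+1)}{n(\beta-2)}$, and then indeed $\frac{\beta-2}{n-1}\cdot\frac{\beta-(n+1)}{n(\beta-2)}+\frac{n+1-\beta}{n-1}\cdot\frac{1}{n}=0$; equivalently, the combined coefficients of $\|\omega\Hess(f)\|^2_{HS}$ and $(\omega\Delta f)^2$ are $\frac{n}{n-1}$ and $-\frac{1}{n-1}$, whose ratio $-\frac{1}{n}$ is exactly what is needed. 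Second, the case $\beta=2$: your primary claim that the combination is ``valid at $\beta=2$ included'' does not stand on its own, because at $\beta=2$ the weight on Proposition \ref{prop:grg} vanishes and the combination degenerates to $(\mathrm I)$, which is \emph{not} the claimed formula: the two differ by $\frac{1}{n-1}\left[\|\omega\Hess(f)\|^2_{HS}-(\omega\Delta f)^2+\omega^2\Ric(\nabla f,\nabla f)\right]$, a term whose integral against $\mu_2$ vanishes (this is precisely the paper's separate Bochner argument for $\beta=2$, using $\omega^2\,d\mu_2=d\vol$) but which is not pointwise zero. So at $\beta=2$ you must either invoke your parenthetical continuity-in-$\beta$ argument --- which is valid, since for fixed $f\in\CC^\infty_c(M)$ both sides are integrals over a fixed compact set of integrands polynomial in $\beta$ against $\omega^{-\beta}\,d\vol$, hence continuous in $\beta$ --- or reproduce the paper's direct computation; the degeneration of the combination alone does not suffice.
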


\begin{proof}
We begin with the generic case $\beta\neq2$. Let $\lambda\in\R$ be a parameter to be optimised later. From Lemma \ref{prop:gammabis}, for all $f\in\CC^\infty_c(M)$, we have 
\begin{align*}
\int_M\Gamma_2(f)\, d\mu_\beta 
=&\int_M \|\omega\Hess(f)\|_{HS}^2+\omega^2\Ric(\nabla f,\nabla f)+ (\beta-1)\omega\Hess(\omega)(\nabla f, \nabla f)\\
&\quad +\frac{\lambda}{2}\langle d|df|^2, \omega d\omega\rangle - \lambda\langle\Delta f df, \omega d\omega\rangle \\
&\quad +\frac{1-\lambda}{2}\underbrace{\langle d|df|^2, \omega d\omega\rangle}_{(i)}  - (1-\lambda)\underbrace{\langle\Delta f df, \omega d\omega\rangle}_{(ii)}\, d\mu_\beta\\
=&\int_M \|\omega\Hess(f)\|_{HS}^2 +\frac{1-\lambda}{\beta-2}\left[\|\omega\Hess(f)\|_{HS}^2-(\omega\Delta f)^2\right]\\
&\quad + \left(1+\frac{1-\lambda}{\beta-2}\right)\omega^2\Ric(\nabla f, \nabla f) +(\beta-1)\omega\Hess(\omega)(\nabla f, \nabla f)\\
&\quad +\frac{\lambda}{2}\underbrace{\langle d|df|^2, \omega d\omega\rangle}_{(iv)} - \lambda\underbrace{\langle\Delta f df, \omega d\omega\rangle}_{(v)}\, d\mu_\beta.
\end{align*}
Here, we have used the formulae \eqref{eq:IPP3} and \eqref{eq:IPP4} for $(i)$ and $(ii)$ respectively and together with Bochner's formula. Then, we choose $\lambda$ in order to exactly compensate the term~$\|\Hess(f)\|^2_{HS}$ with~$\frac{1}{n}(\Delta f)^2$. It results that
\[\lambda = \frac{n+1-\beta}{n-1}.\]
To conclude, we use the formulae \eqref{eq:IPP1} and \eqref{eq:IPP2} to treat the terms $(iv)$ and $(v)$.

In the specific case, $\beta=2$, we do not need the forbidden formulae \eqref{eq:IPP3} and \eqref{eq:IPP4}. Indeed, thanks to Bochner's formula, we have
\begin{align*}
\int_M (\omega\Delta f)^2\, d\mu_2
=&\int_M \Delta f \Delta f\, d\vol\\
=&-\int_M\langle df, d\Delta f\rangle\, d\vol\\
=&-\int_M\langle df, d\Delta f\rangle-\frac{1}{2}\Delta|df|^2\, d\vol\\
=&\int_M\|\Hess(f)\|^2_{HS}+\Ric(\nabla f, \nabla f)\, d\vol\\
=&\int_M\|\omega\Hess(f)\|^2_{HS}+\omega^2\Ric(\nabla f, \nabla f)\, d\mu_2.\\
\end{align*}
It follows that
\[\int_M\|\omega\Hess(f)\|^2_{HS}\, d\mu_2
=\int_M \frac{n}{n-1}\left[\|\omega\Hess(f)\|^2_{HS}-\frac{1}{n}(\omega\Delta f)^2\right]+\frac{1}{n-1}\Ric(\nabla f, \nabla f)\, d\mu_2.\]
The proof ends as in the generic case.
\end{proof}

In order to derive some spectral gap inequality, for~${\beta<n+1}$, the assumption \eqref{eq:bdlow} is not sufficient any more. In this case, we make the following assumption:  it exists $\tilde{c}>0$ such that 
\begin{equation}\label{eq:bdup}
\frac{n}{n-1}\omega\Ric+(\beta-1)\Hess(\omega)+\frac{n+1-\beta}{n-1}\omega[\Hess(\omega)-\Delta\omega\id]\geq \tilde{c}\id. \tag{H2}
\end{equation}
In a general Riemannian manifold, it seems difficult to obtain an handier condition. However, in the Euclidean space $\R^n$, this condition is equivalent to boundedness of the Hessian operator: it exists~${0<\rho_-\leq\rho_+}$ such that
\begin{equation}\label{eq:bdup2}
\rho_-\id \leq \Hess(\omega) \leq \rho_+\id.
\end{equation}
In this case, we will denote by $\kappa$ the corresponding condition number:
\[\kappa=\frac{\rho_+}{\rho_-}\geq1.\]
As previously, in a Riemannian manifold with non-negative Ricci curvature, we can also use the simpler boundedness assumption \eqref{eq:bdup2}. Even if it is more restrictive than \eqref{eq:bdup}, it is explicit and independent of $\beta$. 

Let us remark that if the Hessian is uniformly bounded, then $\mu$ is a probability on $\R^n$ if and only if~${\beta>n/2}$.

\begin{theorem}\label{prop:igg}
For $n\geq2$ and $\beta\leq n+1$, under assumption \eqref{eq:bdup}, we have $\lambda_1(-L)\geq \tilde{c}$. Furthermore, if $\Ric\geq0$, then, under assumption \eqref{eq:bdup2}, we have 
\[\lambda_1(-L)\geq \rho_-\left(\beta-1-\frac{n+1-\beta}{n-1}(n\kappa-1)\right),\quad \forall\, \beta\in\left[\frac{n(n+1)\kappa-2}{n(\kappa+1)-2}, n+1\right].\]
\end{theorem}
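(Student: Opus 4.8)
The plan is to read the result straight off the variance decomposition of Proposition \ref{prop:irg} by checking the sign of each of its four integrands, and then to convert the resulting integrated curvature-dimension bound into a spectral gap through Proposition \ref{prop:PI}. Since Proposition \ref{prop:irg} has already absorbed all the integration by parts and Bochner computations, no genuine analytic difficulty remains; the work is purely a sign and matrix-inequality bookkeeping.

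First I would dispose of the two "squared" integrands. The term $\frac{n}{n-1}[\|\omega\Hess(f)\|_{HS}^2-\frac1n(\omega\Delta f)^2]$ is non-negative by the Cauchy–Schwarz inequality $\|A\|_{HS}^2\geq\frac1n(\Trace A)^2$ applied to $A=\omega\Hess(f)$, exactly as in the proof of Theorem \ref{prop:ggg}. The term $\frac{(n+1-\beta)(\beta-1)}{n-1}[|df|^2|d\omega|^2-\langle df,d\omega\rangle^2]$ is likewise non-negative: its bracket is non-negative by Cauchy–Schwarz, while its prefactor is non-negative because $\beta\leq n+1$ forces $n+1-\beta\geq0$ and $\beta>n/2\geq1$ forces $\beta-1>0$ (this is where $n\geq2$ enters). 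The two remaining integrands combine into a single quadratic form $\omega[\,\cdot\,](\nabla f,\nabla f)$ whose bracket is precisely the left-hand side of \eqref{eq:bdup}. Under \eqref{eq:bdup} this is bounded below by $\tilde c\,\omega|\nabla f|^2=\tilde c\,\Gamma(f)$, so $\int_M\Gamma_2(f)\,d\mu_\beta\geq\tilde c\int_M\Gamma(f)\,d\mu_\beta$ for every $f\in\CC_c^\infty(M)$; Proposition \ref{prop:PI} then gives $\lambda_1(-L)\geq\tilde c$, which is the first assertion.

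For the second assertion I would verify that, under $\Ric\geq0$ and \eqref{eq:bdup2}, the hypothesis \eqref{eq:bdup} holds with the explicit constant claimed, bounding the three pieces of the bracket separately. The Ricci piece $\frac{n}{n-1}\omega\Ric$ is discarded since $\Ric\geq0$ and $\omega>0$; the piece $(\beta-1)\Hess(\omega)\geq(\beta-1)\rho_-\id$ since $\beta>1$; and for the last piece I would combine $\Hess(\omega)\geq\rho_-\id$ with $\Delta\omega=\Trace\Hess(\omega)\leq n\rho_+$ to obtain $\Hess(\omega)-\Delta\omega\id\geq(\rho_--n\rho_+)\id$, multiplied by the non-negative factor $\frac{n+1-\beta}{n-1}$. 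Collecting these and substituting $\rho_+=\kappa\rho_-$ yields $\tilde c=\rho_-\left(\beta-1-\frac{n+1-\beta}{n-1}(n\kappa-1)\right)$.

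Finally I would determine the admissible range by requiring $\tilde c\geq0$. Rewriting this as $(\beta-1)(n-1)\geq(n+1-\beta)(n\kappa-1)$ and solving the resulting affine inequality in $\beta$ leads exactly to $\beta\geq\frac{n(n+1)\kappa-2}{n(\kappa+1)-2}$; at that endpoint $\tilde c=0$ and $\lambda_1(-L)\geq0$ is trivial, while at $\beta=n+1$ one finds $\tilde c=n\rho_-$, consistent with the bound $\lambda_1(-L)\geq\rho_-(\beta-1)$ of Theorem \ref{prop:ggg}. The only delicate point is this last elementary algebra matching the positivity threshold of $\tilde c$ with the stated left endpoint of the $\beta$-interval; the direction of the crude bound $\Delta\omega\leq n\rho_+$ (rather than an eigenvalue-by-eigenvalue estimate) is what produces the clean, explicit, though suboptimal, constant.
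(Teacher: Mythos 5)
Your proposal is correct and takes essentially the same route as the paper: read the lower bound off Proposition \ref{prop:irg} by checking signs (Cauchy--Schwarz for the two squared terms, hypothesis \eqref{eq:bdup} for the curvature bracket), apply Proposition \ref{prop:PI}, and then specialise \eqref{eq:bdup} under $\Ric\geq0$ and \eqref{eq:bdup2} via $\Hess(\omega)\geq\rho_-\id$ and $\Delta\omega\leq n\rho_+$. Your separate bounding of $(\beta-1)\Hess(\omega)$ and $\frac{n+1-\beta}{n-1}\left[\Hess(\omega)-\Delta\omega\,\id\right]$ is algebraically identical to the paper's grouped estimate $(\beta-1+\lambda)\Hess(\omega)-\lambda\Delta\omega\,\id\geq\rho_-\left[\beta-1-\lambda(n\kappa-1)\right]\id$ with $\lambda=\frac{n+1-\beta}{n-1}$, and your positivity threshold for $\tilde{c}$ matches the paper's left endpoint of the $\beta$-interval.
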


\begin{proof}
The first general result is clear, using Cauchy-Schwarz inequality. Then, we remark that ${\beta-1+\lambda\geq 0}$ and $\lambda\geq0$. Under assumption \eqref{eq:bdup2}, we have
\[(\beta-1+\lambda)\Hess(\omega)-\lambda\Delta\omega\id
\geq [\rho_-(\beta-1+\lambda)-\lambda n\rho_+]\id
\geq \rho_-[\beta-1-\lambda(n\kappa-1)]\id.\]
The term $\rho_-[\beta-1-\lambda(n\kappa-1)]$ is positive if and only if $\frac{n(n+1)\kappa-2}{n(\kappa+1)-2}<\beta$.
\end{proof}

The assumption \eqref{eq:bdup2} implies the assumption \eqref{eq:bdlow2}, then it is interesting to remark that the associated spectral gap lower bounds are continuous at $\beta=n+1$. As the condition number $\kappa$ is always greater than $1$, we have
\[\frac{n(n+1)\kappa-2}{n(\kappa+1)-2}\geq \frac{n}{2}+1.\]
Hence, our two spectral gap lower bounds do not cover the entire range $]n/2, +\infty[$ admissible under assumption \eqref{eq:bdlow2}. The equality $\kappa=1$ is attained for generalised Cauchy measures. Then we have the following result.

\begin{coro}[Cauchy - Intermediate range]\label{cor:mid}
Let $\omega : x\in\R^n\mapsto 1+|x|^2$. For all $n\geq2$ and $n/2+1<\beta\leq n+1$, we have~${\lambda_1(-L) \geq 4(\beta-n/2-1)}$, 
and for all~$f\in\CC_c^\infty(\R^n)$, we have
\begin{align*}
4(\beta-n/2-1)&\Var_{\mu_\beta}(f) -\int_{\R^n}|df|^2(1+|x|^2)\, d\mu_\beta \\ 
= -2\int_0^{+\infty}\int_{\R^n}&\frac{n}{n-1}\left[\|\omega\Hess(\BP_tf)\|^2_{HS} -\frac{1}{n}(\omega\Delta \BP_tf)^2\right]\\
 &+4(\beta-1)\frac{n+1-\beta}{n-1}\left[|d\BP_tf|^2|x|^2-\langle d\BP_tf, x\rangle^2\right]\, d\mu_\beta\, dt.
\end{align*}
\end{coro}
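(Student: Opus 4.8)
The plan is to specialise Proposition~\ref{prop:irg} to the generalised Cauchy weight $\omega(x)=1+|x|^2$ and then feed the resulting integrated identity into the variance representation formula of Proposition~\ref{prop:PI}. First I would record the elementary identities for this $\omega$ in $\R^n$: since the ambient space is Euclidean, $\Ric=0$; moreover $\Hess(\omega)=2\,\id$, $\Delta\omega=2n$, $d\omega=2x$ and $|d\omega|^2=4|x|^2$. Substituting these into Proposition~\ref{prop:irg} makes the $\Ric$-term vanish and turns the two $\Hess(\omega)$-type terms into multiples of $|df|^2$.

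The key algebraic step is to collect all terms proportional to $\omega|df|^2=\Gamma(f)$. The third line of Proposition~\ref{prop:irg} contributes $(\beta-1)\Hess(\omega)=2(\beta-1)\id$, and the fourth line contributes $\tfrac{n+1-\beta}{n-1}[\Hess(\omega)-\Delta\omega\,\id]=\tfrac{n+1-\beta}{n-1}\cdot 2(1-n)\id=-2(n+1-\beta)\id$; their sum is $2[(\beta-1)-(n+1-\beta)]\id=4(\beta-\tfrac n2-1)\id$. Hence, writing $\rho=4(\beta-\tfrac n2-1)$, I expect to arrive at
\begin{align*}
\int_{\R^n}(\Gamma_2-\rho\Gamma)(f)\,d\mu_\beta
=\int_{\R^n}&\frac{n}{n-1}\Bigl[\|\omega\Hess(f)\|^2_{HS}-\tfrac1n(\omega\Delta f)^2\Bigr]\\
&+\frac{4(n+1-\beta)(\beta-1)}{n-1}\bigl[|df|^2|x|^2-\langle df,x\rangle^2\bigr]\,d\mu_\beta,
\end{align*}
where the factor $4$ in the second line comes from $|d\omega|^2=4|x|^2$.

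Both bracketed quantities are pointwise non-negative by the Cauchy–Schwarz inequality: the first because $\|A\|^2_{HS}\geq \tfrac1n(\Trace A)^2$ applied to $A=\omega\Hess(f)$, the second because $\langle df,x\rangle^2\leq|df|^2|x|^2$. The prefactors are non-negative precisely on the stated range: $\tfrac{4(n+1-\beta)(\beta-1)}{n-1}\geq0$ forces $\beta\leq n+1$, while $\rho>0$ forces $\beta>\tfrac n2+1$. This yields the integrated criterion $\int_{\R^n}(\Gamma_2-\rho\Gamma)\,d\mu_\beta\geq0$, and Proposition~\ref{prop:PI} then gives $\lambda_1(-L)\geq\rho=4(\beta-\tfrac n2-1)$.

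For the exact error term, I would apply the variance representation formula of Proposition~\ref{prop:PI} with this same $\rho$, multiply through by $\rho$, and insert the displayed identity evaluated at $\BP_tf$ in place of $f$. Since the identity above is an exact equality valid for every $\CC^\infty_c$ function, it transfers verbatim to $\BP_tf$, producing precisely the claimed representation of $\rho\Var_{\mu_\beta}(f)-\int_{\R^n}|df|^2(1+|x|^2)\,d\mu_\beta$. The only genuinely delicate point is the one flagged after Proposition~\ref{prop:PI}: justifying that $\BP_tf$ remains in the appropriate class so that the integrations by parts and the time integral are legitimate, which relies on essential self-adjointness of $L$ together with the $CD(0,\infty)$ bound of Proposition~\ref{prop:cd0}. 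The purely computational part is routine once the identities for $\omega$ are substituted.
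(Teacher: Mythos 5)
Your proposal is correct and follows essentially the same route as the paper: specialising Proposition~\ref{prop:irg} to $\omega(x)=1+|x|^2$ produces exactly the identity the paper records as \eqref{eq:g2mid}, with the $\Gamma(f)$-coefficient $4(\beta-n/2-1)$ and the factor $4$ coming from $|d\omega|^2=4|x|^2$, and the conclusion then follows from the Cauchy--Schwarz bounds and Proposition~\ref{prop:PI}, which is precisely the paper's argument (Theorem~\ref{prop:igg} in the case $\kappa=1$). The regularity caveat you flag about transferring the identity to $\BP_tf$ is the same one the paper addresses after Proposition~\ref{prop:PI}.
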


For $n/2+2<\beta<n+1$, an extremal function must satisfy $|d f|^2|x|^2 = \langle d f, x\rangle^2$.
Then there exist $a,b\in\R$ such that for all $x\in\R^n$, $f(x) = a|x|^2+b$. For $\beta = n+1$, we recover the same error term as in the previous subsection and so, the same extremal function. The parameter $\beta=n+1$ marks a transition between affine extremal functions and quadratic ones.

This result for mid-range $\beta$ improves the previous results from the literature as it is the first one which obtains the optimal spectral gap with curvature-dimension argument.
 
\subsection{Lower range}
It seems difficult to obtain any result for the smallest $\beta$ and for unspecified $\omega$. Yet, in the particular case of generalised Cauchy measures, we will be able to conclude. As for the one-dimensional case, the appropriate formula for the lower range is obtained by factorising the second-order derivative, in the way we proved the curvature-dimension criterion in Proposition \ref{prop:cd0}. However, we need to factorise a fraction of it only and use integration parts formulae on the remaining second-order terms. The result is achieved by optimising on these fractions.  

In the previous subsection, we have obtained the following formula
\begin{equation}\label{eq:g2mid}
\begin{split}
\int_{\R^n}\Gamma_2(f)\, d\mu_\beta = \int_{\R^n} &\frac{n}{n-1}\left[\|\omega\Hess(f)\|^2_{HS} -\frac{1}{n}(\omega\Delta f)^2\right]\\
& +4(\beta-1)\frac{n+1-\beta}{n-1}\left[|df|^2|x|^2-\langle df, x\rangle^2\right]+ 4(\beta-\frac{n}{2}-1)\Gamma(f)\, d\mu_\beta.
\end{split}
\end{equation}

Let $\varepsilon$ be the optimisation parameter. We want to force the following factorisation   
\begin{align*}
\left\|\omega\Hess f + \varepsilon\frac{\nabla f\otimes x+ x\otimes\nabla f}{2}\right\|^2_{HS}
&= \|\omega\Hess f\|^2_{HS} + \varepsilon\langle d|df|^2, x\omega\rangle + \frac{\varepsilon^2}{2}[|df|^2|x|^2+\langle df, x\rangle^2]\\
\left(\omega\Delta f + \varepsilon\langle df, x\rangle\right)^2 &= \left(\omega\Delta f\right)^2 + 2\varepsilon\langle \Delta fdf, x\omega\rangle +\varepsilon^2\langle df, x\rangle^2.
\end{align*}

We inject these twisted terms in equation \eqref{eq:g2mid} and we use the integration by parts formulae \eqref{eq:IPP2} and then \eqref{eq:IPP1}. We have

\begin{align*}
\int_{\R^n}\Gamma_2(f)\, d\mu_\beta 
= \int_{\R^n}& \frac{n}{n-1}\left[\left\|\omega\Hess f + \varepsilon\frac{\nabla f\otimes x+ x\otimes\nabla f}{2}\right\|^2_{HS} -\frac{1}{n}\left(\omega\Delta f + \varepsilon\langle df, x\rangle\right)^2\right]\\
&-\frac{n\varepsilon}{n-1}\langle d|df|^2,x\omega\rangle -\frac{n\varepsilon^2}{2(n-1)}\left[|df|^2|x|^2+\langle df, x\rangle^2\right]\\
&+\frac{2\varepsilon}{n-1}\langle \Delta f df, \omega x\rangle +\frac{\varepsilon^2}{n-1}\langle df, x\rangle^2\\
& +4(\beta-1)\frac{n+1-\beta}{n-1}\left[|df|^2|x|^2-\langle df, x\rangle^2\right]+ 4(\beta-\frac{n}{2}-1)\Gamma(f)\, d\mu_\beta\\
= \int_{\R^n}& \frac{n}{n-1}\left[\left\|\omega\Hess f + \varepsilon\frac{\nabla f\otimes x+ x\otimes\nabla f}{2}\right\|^2_{HS} -\frac{1}{n}\left(\omega\Delta f + \varepsilon\langle df, x\rangle\right)^2\right]\\
& + \BB_\varepsilon\left[|df|^2|x|^2-\langle df, x\rangle^2\right] + \frac{4(\beta-1)\varepsilon-(n-1)\varepsilon^2}{n-1}|df|^2|x|^2\\
&-\frac{\varepsilon}{n-1}\langle d|df|^2, x\omega\rangle + \left[4(\beta-\frac{n}{2}-1)-\frac{2\varepsilon}{n-1}\right]\Gamma(f)\, d\mu_\beta\\
= \int_{\R^n}& \frac{n}{n-1}\left[\left\|\omega\Hess f + \varepsilon\frac{\nabla f\otimes x+ x\otimes\nabla f}{2}\right\|^2_{HS} -\frac{1}{n}\left(\omega\Delta f + \varepsilon\langle df, x\rangle\right)^2\right]\\
& + \BB_\varepsilon\left[|df|^2|x|^2-\langle df, x\rangle^2\right] - \varepsilon[\varepsilon+2(\beta-1)]|df|^2|x|^2\\
& +\left[4(\beta-\frac{n}{2}-1)+(n-2)\varepsilon\right]\Gamma(f)\, d\mu_\beta\\
= \int_{\R^n}& \frac{n}{n-1}\left[\left\|\omega\Hess f + \varepsilon\frac{\nabla f\otimes x+ x\otimes\nabla f}{2}\right\|^2 -\frac{1}{n}\left(\omega\Delta f + \varepsilon\langle df, x\rangle\right)^2\right]\\
& +\BB_\varepsilon\left[|df|^2|x|^2-\langle df, x\rangle^2\right]+ \CC_\varepsilon|df|^2+\DD_\varepsilon\Gamma(f)\, d\mu_\beta,
\end{align*}

where $\BB$, $\CC$ and $\DD$ are defined by
\begin{align*}
\BB_\varepsilon &= \frac{(n-2)\varepsilon^2-8(\beta-1)\varepsilon+8(\beta-1)(n+1-\beta)}{2(n-1)},\\
\CC_\varepsilon &= \varepsilon[\varepsilon+2(\beta-1)],\\
\DD_\varepsilon &= -\varepsilon^2+(n+2-2(\beta-1))\varepsilon+4(\beta-\frac{n}{2}-1).
\end{align*}
Note that the particular form of $\omega$ has been used in several simplifications, as in this case $|d\omega|^2$ and $\omega\Hess(\omega)$ are easily comparable to $\omega$. To optimise this expression, we are looking for the maximum of $\DD$ over the parameters $\varepsilon$ such that $\BB$ and $\CC$ are non-negative. This optimum is reached for $\varepsilon_0 = n/2+2-\beta$, and we have 
\begin{align*}
\BB_{\varepsilon_0}&= \frac{(n-2)\left[4(\beta-1)^2-4(n-2)(\beta-1)+(n+2)^2\right]}{8(n-1)},\\
\CC_{\varepsilon_0}&= \left(\frac{n}{2}+2-\beta\right)\left(\beta+\frac{n}{2}\right),\\
\DD_{\varepsilon_0} &= \left(\beta-\frac{n}{2}\right)^2.
\end{align*}

Then, for all $n\geq2$ and $n/2<\beta$, we have the following formula: 

\begin{equation}\label{eq:g2low}
\begin{split}
\int_{\R^n}\Gamma_2(f)\, d\mu_\beta = \int_{\R^n}& \frac{n}{n-1}\left[\left\|\omega\Hess f + \varepsilon_0\frac{\nabla f\otimes x+ x\otimes\nabla f}{2}\right\|^2 -\frac{1}{n}\left(\omega\Delta f + \varepsilon_0\langle df, x\rangle\right)^2\right]\\
& +\frac{(n-2)\left[4(\beta-1)^2-4(n-2)(\beta-1)+(n+2)^2\right]}{8(n-1)}\left[|df|^2|x|^2-\langle df, x\rangle^2\right]\\
& +\left(\frac{n}{2}+2-\beta\right)\left(\beta+\frac{n}{2}\right)|df|^2 +\left(\beta-\frac{n}{2}\right)^2\Gamma(f)\, d\mu_\beta.
\end{split}
\end{equation}

Remark that the optimisation parameter $\varepsilon_0$ was expected, as we recover the expression \eqref{eq:g2mid} at the transition $\beta= n/2+2$. For $\beta\geq n/2+2$, we can prove that the optimal is $\varepsilon_0=0$.

To conclude, for all $n\geq2$ and $n/2<\beta\leq n/2+2$ we have $\BB_{\varepsilon_0}\geq0$, $\CC_{\varepsilon_0}\geq$ and $\DD_{\varepsilon_0}>0$. Hence, we have proved the following weighted Poincaré inequality on the lower range.

\begin{coro}[Cauchy - Lower range]\label{cor:low}
For all $n\geq2$ and $n/2<\beta\leq n/2+2$, we have~$\lambda_1(-L)\geq (\beta-n/2)^2$ and for all~${f\in\CC_c^\infty(\R^n)}$, we have
\begin{align*}
\left(\beta-\frac{n}{2}\right)^2\Var_{\mu_\beta}(f)& -\int_{\R^n}|df|^2(1+|x|^2)\, d\mu_\beta \\ 
 = -2\int_0^{+\infty}\int_{\R^n}&\frac{n}{n-1}\left\|\omega\Hess \BP_tf + \varepsilon_0\frac{\nabla \BP_tf\otimes x+ x\otimes\nabla \BP_tf}{2}\right\|^2 -\frac{1}{n-1}\left(\omega\Delta \BP_tf + \varepsilon_0\langle d\BP_tf, x\rangle\right)^2\\
& +\frac{(n-2)\left[4(\beta-1)^2-4(n-2)(\beta-1)+(n+2)^2\right]}{8(n-1)}\left[|d\BP_tf|^2|x|^2-\langle d\BP_tf, x\rangle^2\right]\\
& +\left(\frac{n}{2}+2-\beta\right)\left(\beta+\frac{n}{2}\right)|d\BP_tf|^2\, d\mu_\beta\, dt,
\end{align*}
where $\varepsilon_0=\beta-\frac{n}{2}-2$.
\end{coro}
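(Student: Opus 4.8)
The plan is to read both the spectral-gap bound and the error identity directly off the factorised formula \eqref{eq:g2low}, which already exhibits $(\beta-n/2)^2=\DD_{\varepsilon_0}$ as the coefficient of $\Gamma(f)$ and gathers the remaining contributions with coefficients $\BB_{\varepsilon_0}$, $\CC_{\varepsilon_0}$ and the Cauchy--Schwarz defect of the twisted Hessian. It then suffices to check that these remaining contributions are non-negative on the range $n/2<\beta\le n/2+2$ and to feed the identity into the variance representation formula of Proposition \ref{prop:PI}.

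First I would settle the three sign conditions. Writing $A=\omega\Hess f+\varepsilon_0\frac{\nabla f\otimes x+x\otimes\nabla f}{2}$, one has $\Trace A=\omega\Delta f+\varepsilon_0\langle df,x\rangle$, so the term $\frac{n}{n-1}\bigl[\|A\|^2_{HS}-\frac1n(\omega\Delta f+\varepsilon_0\langle df,x\rangle)^2\bigr]$ is exactly $\frac{n}{n-1}\bigl(\|A\|^2_{HS}-\frac1n(\Trace A)^2\bigr)\ge0$ by the inequality $\|A\|^2_{HS}\ge\frac1n(\Trace A)^2$ already used in the proof of Theorem \ref{prop:ggg}. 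The factor $|df|^2|x|^2-\langle df,x\rangle^2$ is likewise a Cauchy--Schwarz defect, hence non-negative, and its coefficient $\BB_{\varepsilon_0}$ is non-negative for every $n\ge2$: the bracket $4(\beta-1)^2-4(n-2)(\beta-1)+(n+2)^2$, read as a quadratic in $\beta-1$, has discriminant $16(n-2)^2-16(n+2)^2=-128n<0$ and therefore stays strictly positive, while $n-2\ge0$. Finally $\CC_{\varepsilon_0}=(\frac{n}{2}+2-\beta)(\beta+\frac{n}{2})$ splits into the always-positive factor $\beta+\frac{n}{2}$ and the factor $\frac{n}{2}+2-\beta$, which is non-negative exactly when $\beta\le n/2+2$; since $\CC_{\varepsilon_0}$ multiplies the manifestly non-negative $|df|^2$, this is precisely where the upper endpoint of the admissible range is used.

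With the three signs in hand, \eqref{eq:g2low} shows $\int_{\R^n}\bigl(\Gamma_2-(\beta-n/2)^2\Gamma\bigr)(f)\,d\mu_\beta\ge0$ for every $f\in\CC_c^\infty(\R^n)$, i.e.\ the integrated curvature-dimension criterion with $\rho=(\beta-n/2)^2$, whence $\lambda_1(-L)\ge(\beta-n/2)^2$ by Proposition \ref{prop:PI}. For the exact deficit I would take the variance representation of the same proposition with this $\rho$, rearranged as $\rho\Var_{\mu_\beta}(f)-\int_{\R^n}\Gamma(f)\,d\mu_\beta=-2\int_0^{+\infty}\!\int_{\R^n}(\Gamma_2-\rho\Gamma)(\BP_tf)\,d\mu_\beta\,dt$, and substitute \eqref{eq:g2low} for the integrand evaluated at $\BP_tf$. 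The $\DD_{\varepsilon_0}=(\beta-n/2)^2$ term cancels against $\rho\Gamma$, leaving exactly the three terms of the statement, the normalisation of the second one matching through $\frac{n}{n-1}\cdot\frac1n=\frac{1}{n-1}$.

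The delicate point, which I expect to be the real obstacle, is the legitimacy of evaluating \eqref{eq:g2low} at $\BP_tf$ rather than at a compactly supported function: the integration-by-parts identities \eqref{eq:IPP1}--\eqref{eq:IPP4} underlying \eqref{eq:g2low} were established on $\CC_c^\infty(\R^n)$, whereas $\BP_tf$ is smooth but no longer compactly supported. I would justify this extension through the essential self-adjointness of $L$ together with the $CD(0,\infty)$ property of Proposition \ref{prop:cd0}, which, as discussed in Section \ref{Sec:var}, allows extending the error formula to $\DD(L)$ and provides the decay of $\BP_tf$ and its derivatives needed for the boundary terms in the integrations by parts to vanish.
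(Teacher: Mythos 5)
Your proposal is correct and follows essentially the same route as the paper: the corollary is read off the established identity \eqref{eq:g2low} by checking that $\BB_{\varepsilon_0}$, $\CC_{\varepsilon_0}$ are non-negative and $\DD_{\varepsilon_0}=(\beta-n/2)^2$ on the range $n/2<\beta\leq n/2+2$, and then feeding the integrated criterion into Proposition \ref{prop:PI}. Your treatment is in fact slightly more thorough than the paper's, which merely asserts the sign conditions, whereas you verify $\BB_{\varepsilon_0}\geq0$ via the discriminant computation and explicitly flag the extension of the compactly-supported identities to $\BP_tf$ (justified, as you say, by essential self-adjointness and the $CD(0,\infty)$ criterion of Proposition \ref{prop:cd0}).
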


Note that the error term is positive for all $n/2<\beta<n/2+2$. This implies that there cannot exist any extremal function.

When all has been said and done, we have obtained a global  lower bound for all $n/2<\beta$. Comparing to the upper bounds from Corollary \ref{cor:uppbound}, this bound is optimal and we have proved the $n$-dimensional part of Theorem \ref{prop:spec}.

\begin{coro}
For $n\geq2$, we have
\[\lambda_1(-L) =\left\{
\begin{array}{ll} 
(\beta-n/2)^2 & \text{if}\quad n/2<\beta\leq n/2+2\\
4(\beta-n/2-1)& \text{if}\quad n/2+2\leq\beta\leq n+1\\ 
2(\beta-1) & \text{if}\quad n+1\leq \beta.
\end{array}\right.\]
\end{coro}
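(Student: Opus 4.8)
The plan is to combine the spectral gap upper bounds of Corollary~\ref{cor:uppbound} with the matching lower bounds established in the three preceding corollaries, and then to verify that together they cover the whole admissible range $\beta > n/2$ with no gaps.

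First I would recall that Corollary~\ref{cor:uppbound} already furnishes, for every $\beta > n/2$, an upper bound for $\lambda_1(-L)$ equal to the three-branch expression in the statement. It therefore suffices to exhibit, on each of the three subranges, a lower bound for $\lambda_1(-L)$ equal to the same quantity. These are precisely the contents of the Cauchy corollaries: on $n/2 < \beta \leq n/2+2$, Corollary~\ref{cor:low} gives $\lambda_1(-L) \geq (\beta - n/2)^2$; on $n/2+2 \leq \beta \leq n+1$, Corollary~\ref{cor:mid} gives $\lambda_1(-L) \geq 4(\beta - n/2 - 1)$; and on $\beta \geq n+1$, Corollary~\ref{cor:up} gives $\lambda_1(-L) \geq 2(\beta-1)$. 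In each subrange the lower bound coincides exactly with the upper bound of Corollary~\ref{cor:uppbound}, which forces equality.

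The only point demanding a little care is the bookkeeping at the transition values $\beta = n/2+2$ and $\beta = n+1$. The three subranges overlap around these points, and their union is exactly $(n/2, +\infty)$, so every admissible $\beta$ is covered by at least one matching pair of bounds. At the junctions one checks that the adjacent branches agree---$(\beta - n/2)^2 = 4(\beta - n/2 - 1) = 4$ at $\beta = n/2+2$, and $4(\beta - n/2 - 1) = 2(\beta - 1) = 2n$ at $\beta = n+1$---so the piecewise expression is continuous and the assignment of bounds is unambiguous there.

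I expect no genuine obstacle in this final step: all the analytic work, namely the integrated curvature-dimension identity of Lemma~\ref{prop:gammabis}, the integration by parts formulae, and the optimisation of the factorisation parameter $\varepsilon_0$, has already been carried out in Corollaries~\ref{cor:up}, \ref{cor:mid} and \ref{cor:low}. The present statement is purely the synthesis: reading off, in each range, that the upper and lower bounds meet, and hence that $\lambda_1(-L)$ equals the stated three-branch value for all $\beta > n/2$.
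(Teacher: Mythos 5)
Your proposal is correct and is essentially the paper's own argument: the corollary is obtained by matching the upper bounds of Corollary~\ref{cor:uppbound} against the lower bounds of Corollaries~\ref{cor:up}, \ref{cor:mid} and \ref{cor:low}, whose (overlapping) ranges of validity cover all of $\beta>n/2$, forcing equality on each branch. Your verification that the branches agree at the junctions $\beta=n/2+2$ and $\beta=n+1$ corresponds to the paper's remark that the resulting expression is continuous in $\beta$.
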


It is noticeable that the obtained expressions are continuous in $\beta$. Let us remark that in dimension~${n=2}$, the intermediate range $n/2+2\leq \beta\leq n+1$ is reduced to~$\beta =3$. Our method shows that curvature-dimension arguments are sufficient to obtain the spectral gap for generalised Cauchy measures. In this work, the error term has been used only to recover extremal functions. It should be interesting to use these formulae in order to get some explicit lower bounds of the deficit in terms of the distance to extremal function in some topology, as it can be done for Gross' logarithmic Sobolev inequality in \cite{BGRS}.

\section{The one-dimensional case}
\label{Sec:1d}
In this section, we end the proof of Theorem \ref{prop:spec} with the dimension $n=1$. For the sake of completeness, we recover this result with our method and make some further remarks, especially on the error term and on extremal functions.   

We still denote by $\omega$ the function $\omega : x\in\R\mapsto 1+x^2$. As shown in Section \ref{Sec:wPI}, we have 
\begin{align*}
Lf &= \omega f'' - 2(\beta-1)xf',\\
\Gamma(f) &= \omega f'^2,\\
\Gamma_2(f) &= \omega^2f''^2 +[\omega+2(\beta-1)]f'^2 + 2f''f'x\omega.
\end{align*}

Firstly, we remark that, the operator $L$ satisfies a nice curvature-dimension criterion. Indeed, in the one-dimensional case, the factorisation from Proposition \ref{prop:cd0} brings
\[\Gamma_2(f)=(\omega f''+xf')^2+\frac{2\beta-1}{\omega}\Gamma(f).\]

This factorisation formula enlightens one of the two options we have. The first one is the integration by parts, as used in our general method. The second one, which appears here, is the factorisation. A mix between these two approaches allows to recover the spectral gap for the whole range $\beta\in]1/2,+\infty[$.

Let $\varepsilon\in\R$ be an optimisation parameter. We have 
\[\Gamma_2(f)= (\omega f'' +\varepsilon xf')^2 + 2(1-\varepsilon)f''f'x\omega + [2(\beta-1)+\omega-\varepsilon^2x^2]f'^2.\]

Now, using the integration by parts formula \eqref{eq:IPP1}, we get 

\[\int_\R \Gamma_2(f)\, d\mu_\beta = \int_\R (\omega f'' +\varepsilon xf')^2 + [A_\varepsilon + B_\varepsilon|x|^2]f'^2\, d\mu_\beta,\]
with $A_\varepsilon = 2(\beta-1)+\varepsilon$ and $B_\varepsilon = 2(\beta-1)+\varepsilon -\varepsilon(2(\beta-1)+\varepsilon)$. We optimise to obtain the maximum over $\varepsilon$ of $\min\{A_\varepsilon, B_\varepsilon\}$. Two cases appear: small $\beta$ and large $\beta$. 

If $\beta\geq 3/2$ then the optimum is reached for $\varepsilon=0$, as a direct application of Theorem \ref{prop:ggg}. We obtain 
\[\int_\R\Gamma_2(f)\,d\mu_\beta = \int_\R \omega^2 f''^2 +2(\beta-1)f'^2\, d\mu_\beta.\]
Then, according to Proposition \ref{prop:PI}, we have the following result.

\begin{coro}
If $\beta\geq3/2$, then $\lambda(-L)\geq 2(\beta-1)$ and for all $f\in\CC_c^\infty(\R)$, we have 
\[2(\beta-1)\Var_{\mu_\beta}(f) = \int_\R\Gamma(f)\, d\mu_\beta -2\int_0^{+\infty}\int_\R \omega^2(\BP_tf)''^2\, d\mu_\beta\, dt.\]
Furthermore, if $\beta>3/2$ then $\lambda(-L)= 2(\beta-1)$ as linear functions are eigenfunctions associated with $2(\beta-1)$.
\end{coro}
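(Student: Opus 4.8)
The plan is to read off the corollary from the optimised one-dimensional identity
\[\int_\R \Gamma_2(f)\, d\mu_\beta = \int_\R (\omega f'' +\varepsilon xf')^2 + [A_\varepsilon + B_\varepsilon|x|^2]f'^2\, d\mu_\beta\]
established just above, together with the variance representation of Proposition~\ref{prop:PI}. First I would specialise to $\varepsilon=0$. Writing $a=2(\beta-1)$, one has $A_0=B_0=a$, and since $\beta\geq 3/2$ gives $a\geq1>0$ both coefficients are non-negative; a short check that $A_\varepsilon$ is increasing in $\varepsilon$ while $B_\varepsilon$ is a concave parabola peaking at $\varepsilon=(1-a)/2\leq0$ confirms that $\varepsilon=0$ maximises $\min\{A_\varepsilon,B_\varepsilon\}$, so no other choice improves matters. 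This yields
\[\int_\R\Gamma_2(f)\,d\mu_\beta = \int_\R \omega^2 f''^2\, d\mu_\beta + 2(\beta-1)\int_\R\Gamma(f)\, d\mu_\beta,\]
whence $\int_\R(\Gamma_2-2(\beta-1)\Gamma)(f)\,d\mu_\beta=\int_\R\omega^2 f''^2\,d\mu_\beta\geq0$ for every $f\in\CC_c^\infty(\R)$. By the characterisation in Proposition~\ref{prop:PI}, the integrated criterion with $\rho=2(\beta-1)$ forces $\lambda_1(-L)\geq 2(\beta-1)$.

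Next I would obtain the exact error term. Inserting $\rho=2(\beta-1)$ into the variance formula of Proposition~\ref{prop:PI} and multiplying by $\rho$ gives
\[2(\beta-1)\Var_{\mu_\beta}(f)=\int_\R\Gamma(f)\,d\mu_\beta-2\int_0^{+\infty}\int_\R(\Gamma_2-2(\beta-1)\Gamma)(\BP_tf)\,d\mu_\beta\,dt.\]
It then remains to replace the inner integrand by $\int_\R\omega^2(\BP_tf)''^2\,d\mu_\beta$, that is, to apply the identity of the first step to $g=\BP_tf$ in place of a compactly supported test function. This is the one genuinely delicate point: the identity was obtained through integration by parts valid a priori only on $\CC_c^\infty(\R)$, and its transfer to the semigroup orbit rests on the essential self-adjointness of $L$ together with the $CD(0,\infty)$ bound of Proposition~\ref{prop:cd0}, as flagged in Section~\ref{Sec:var}. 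I would invoke that extension to $\DD(L)$, together with the regularisation $\BP_tf\to f$ as $t\to0$, to close the formula.

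Finally I would establish optimality for $\beta>3/2$. The linear function $f(x)=x$ satisfies $Lf=\omega\cdot 0-2(\beta-1)x=-2(\beta-1)f$, so it is an eigenfunction of $-L$ with eigenvalue $2(\beta-1)$, and $f\in\L^2(\mu_\beta)$ precisely when $\int_\R x^2(1+x^2)^{-\beta}\,dx<\infty$, i.e. for $\beta>3/2$. Evaluating the Rayleigh quotient at this eigenfunction gives $\lambda_1(-L)\leq 2(\beta-1)$, which combined with the lower bound yields $\lambda_1(-L)= 2(\beta-1)$. The main obstacle is thus not the algebra, which is immediate from the preceding display, but the functional-analytic extension of the $\Gamma_2$ identity from test functions to $\BP_tf$ required to make the error term exact.
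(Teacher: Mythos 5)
Your proposal is correct and follows essentially the same route as the paper: the $\varepsilon=0$ case of the one-dimensional identity (i.e.\ \eqref{eq:IPP1} applied to the expression of $\Gamma_2$, giving $\int_\R\Gamma_2(f)\,d\mu_\beta=\int_\R\omega^2f''^2\,d\mu_\beta+2(\beta-1)\int_\R\Gamma(f)\,d\mu_\beta$), then the integrated criterion and variance formula of Proposition~\ref{prop:PI}, and finally the linear eigenfunction to get the matching upper bound for $\beta>3/2$. Your additional care --- checking that $\varepsilon=0$ maximises $\min\{A_\varepsilon,B_\varepsilon\}$ and flagging the extension of the $\Gamma_2$ identity from $\CC_c^\infty(\R)$ to $\BP_tf$ via essential self-adjointness and the $CD(0,\infty)$ bound --- only makes explicit points the paper delegates to Theorem~\ref{prop:ggg} and to the discussion in Section~\ref{Sec:var}, so nothing diverges in substance.
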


Actually, affine functions are the only extremal functions, up to regularity, as any extremal function, $f$,  satisfies $f''^2=0$. Remark that we cannot conclude yet when $\beta=3/2$ as affine functions do not belong to $\L^2(\mu_{3/2})$.
   
If $1/2<\beta\leq 3/2$, the maximum over $\varepsilon$ is obtained for $\varepsilon = 3/2-\beta$ and we have
\[\int_\R\Gamma_2(f)\,d\mu_\beta = \int_\R \left(\omega f''+\left(\frac{3}{2}-\beta\right)f'\right)^2 +\left(\beta-\frac{1}{2}\right)^2\omega f'^2 +\left(\beta-\frac{1}{2}\right)\left(\frac{3}{2}-\beta\right) f'^2\, d\mu_\beta.\]
Then, we obtain the final result.

\begin{coro}\label{cor:1low}
If $1/2<\beta\leq3/2$, then $\lambda(-L)=(\beta-1/2)^2$ and for all  
\begin{align*}
\left(\beta-\frac{1}{2}\right)^2\Var_{\mu_\beta}(f) - \int_\R\Gamma(f)\, d\mu_\beta = -2\int_0^{+\infty}\int_\R& \left(\omega\BP_tf''+\left(\frac{3}{2}-\beta\right)\BP_tf'\right)^2\\
& + \left(\frac{3}{2}-\beta\right)\left(\beta-\frac{1}{2}\right)\BP_tf'^2\, d\mu_\beta\, dt.
\end{align*} 
\end{coro}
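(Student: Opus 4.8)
The plan is to apply Proposition \ref{prop:PI} with $\rho=(\beta-1/2)^2$, which reduces the entire statement to verifying the integrated criterion $\int_\R(\Gamma_2-\rho\Gamma)(f)\,d\mu_\beta\geq 0$ together with an exact identity for the deficit. The key structural input is the factorised $\Gamma_2$ identity obtained just above the statement for the optimising choice $\varepsilon=3/2-\beta$, namely
\[
\int_\R\Gamma_2(f)\,d\mu_\beta=\int_\R\Bigl(\omega f''+\bigl(\tfrac{3}{2}-\beta\bigr)f'\Bigr)^2+\bigl(\beta-\tfrac12\bigr)^2\omega f'^2+\bigl(\beta-\tfrac12\bigr)\bigl(\tfrac32-\beta\bigr)f'^2\,d\mu_\beta.
\]
Since $\Gamma(f)=\omega f'^2$, subtracting $(\beta-1/2)^2\int_\R\Gamma(f)\,d\mu_\beta$ cancels the middle term exactly, leaving
\[
\int_\R\bigl(\Gamma_2-(\beta-\tfrac12)^2\Gamma\bigr)(f)\,d\mu_\beta=\int_\R\Bigl(\omega f''+\bigl(\tfrac32-\beta\bigr)f'\Bigr)^2+\bigl(\tfrac32-\beta\bigr)\bigl(\beta-\tfrac12\bigr)f'^2\,d\mu_\beta.
\]

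First I would check the sign of the coefficient $(\tfrac32-\beta)(\beta-\tfrac12)$ on the range $1/2<\beta\leq 3/2$: here $\beta-1/2>0$ and $3/2-\beta\geq 0$, so the product is non-negative, the first term is a perfect square, and hence the right-hand side is $\geq 0$ for every $f\in\CC_c^\infty(\R)$. This is precisely the integrated $CD$ criterion at $\rho=(\beta-1/2)^2$, so Proposition \ref{prop:PI} yields $\lambda(-L)\geq(\beta-1/2)^2$. For the matching upper bound I would invoke the spectral gap upper bound for $n=1$, $\lambda(-L)\leq(\beta-1/2)^2$, which is the one-dimensional analogue established earlier (the sequence $f_\varepsilon=\omega^\varepsilon$ with $\varepsilon\uparrow(2\beta-1)/4$ gives the ratio converging to $(\beta-1/2)^2$). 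Combining the two inequalities gives the equality $\lambda(-L)=(\beta-1/2)^2$.

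Next I would substitute $\BP_tf$ for $f$ in the deficit identity and integrate in $t$, exactly as in the variance representation formula of Proposition \ref{prop:PI}: this produces
\[
\bigl(\beta-\tfrac12\bigr)^2\Var_{\mu_\beta}(f)-\int_\R\Gamma(f)\,d\mu_\beta=-2\int_0^{+\infty}\int_\R\Bigl(\omega(\BP_tf)''+\bigl(\tfrac32-\beta\bigr)(\BP_tf)'\Bigr)^2+\bigl(\tfrac32-\beta\bigr)\bigl(\beta-\tfrac12\bigr)(\BP_tf)'^2\,d\mu_\beta\,dt,
\]
which is the displayed formula in the corollary. The only routine verification is matching the integration-by-parts step: the identity before the statement was derived using formula \eqref{eq:IPP1}, which is valid for the smooth compactly supported $f$; one must note that $\BP_tf$ lies in the domain where the same manipulation applies, so the passage is legitimate by the essential self-adjointness and $CD(0,\infty)$ discussion in Section \ref{Sec:var}.

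I do not expect a serious obstacle here, since the heavy lifting — the $\varepsilon$-optimisation producing the clean factorisation — has already been carried out preceding the statement. The one point requiring care is the boundary case $\beta=3/2$, where the cross coefficient $(\tfrac32-\beta)(\beta-\tfrac12)$ vanishes and the factorisation reduces to the pure Hessian square $(\omega f'')^2$, consistent with the $\beta\geq 3/2$ corollary; I would remark that the two corollaries agree at $\beta=3/2$ and that, as noted for the large-$\beta$ case, affine functions fail to be in $\L^2(\mu_{3/2})$ so no extremal function exists there. Finally, since the deficit integrand is a sum of a square and a strictly positive term for $1/2<\beta<3/2$, it is strictly positive unless $\BP_tf$ is constant, which recovers the earlier observation that no extremal functions exist on the open lower range.
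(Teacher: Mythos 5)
Your proposal takes essentially the same route as the paper for the two main ingredients: you quote the $\varepsilon$-optimised factorisation of $\int_\R\Gamma_2\,d\mu_\beta$ at $\varepsilon_0=\tfrac32-\beta$, observe that subtracting $(\beta-\tfrac12)^2\int_\R\Gamma(f)\,d\mu_\beta$ cancels the $\omega f'^2$ term and leaves a perfect square plus the term $(\tfrac32-\beta)(\beta-\tfrac12)f'^2$, which is non-negative precisely on $1/2<\beta\leq 3/2$, and you then apply Proposition \ref{prop:PI} (with $\BP_tf$ inserted) to obtain both $\lambda(-L)\geq(\beta-\tfrac12)^2$ and the deficit representation. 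This is exactly the paper's argument. Where you genuinely diverge is the matching upper bound. The paper does not adapt the Section \ref{Sec:opt} computation; it invokes Proposition \ref{prop:1dopt}, cited from \cite{BJM2} and stated immediately after the corollary, which uses the \emph{odd} test functions $x\,\omega^\varepsilon(x)$ with $\varepsilon\uparrow(2\beta-3)/4$, i.e.\ approximations of primitives of the formal extremal profile $\omega^{(2\beta-3)/4}$. You instead use the even functions $\omega^\varepsilon$ with $\varepsilon\uparrow(2\beta-1)/4$ and call this "established earlier"; it is not, since the paper states that proposition only for $n\geq2$. Your claim is nevertheless true: the computation of $Lf_\varepsilon$, the divergence of $\int f_\varepsilon^2\,d\mu_\beta$ at the limiting exponent, and the vanishing of the remainder term all carry over verbatim to $n=1$ and again produce the limit $(\beta-\tfrac12)^2$, so your route works at the cost of writing out this short verification rather than citing it.

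One caveat, which you inherited from the statement rather than introduced: substituting $\varepsilon_0=\tfrac32-\beta$ into the general identity $\int_\R(\omega f''+\varepsilon xf')^2+[A_\varepsilon+B_\varepsilon x^2]f'^2\,d\mu_\beta$ shows that the factorised square must read $(\omega f''+(\tfrac32-\beta)\,x\,f')^2$, with the factor $x$; without it the identity is false. This is confirmed by the extremal-function discussion that follows the corollary: the equation $\omega f''+(\tfrac32-\beta)xf'=0$ (not the one without $x$) is the one whose solutions are primitives of $\omega^{(2\beta-3)/4}$. Your cancellation and sign argument, and hence the whole proof, are unaffected by this correction, since they never use the internal structure of the square — only the $\omega f'^2$ term and the sign of $(\tfrac32-\beta)(\beta-\tfrac12)$ matter.
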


Our computation only provides a lower bound of the spectral gap. On this range, there is no eigenfunction, as in the $n$-dimensional case. The article \cite{BJM2} proved the optimality using a sequence of functions.

\begin{prop}[\cite{BJM2}]\label{prop:1dopt}
For $n=1$, $1/2<\beta\leq3/2$ and $\varepsilon<(2\beta-3)/4$, the function $f_\varepsilon :x\mapsto x\omega^\varepsilon(x)$ belongs to $\L^2(\mu)$ and we have
\[\lim_{\varepsilon\uparrow(2\beta-3)/4} \frac{\int_\R\Gamma(f_\varepsilon)\,d\mu_\beta}{\Var_{\mu_\beta}(f_\varepsilon)}=\left(\beta-\frac{1}{2}\right)^2.\]
\end{prop}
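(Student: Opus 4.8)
The plan is to evaluate the Rayleigh quotient explicitly and extract its leading behaviour as $\varepsilon$ approaches the critical value. First I would settle the integrability claim: since $f_\varepsilon^2\,\omega^{-\beta}=x^2(1+x^2)^{2\varepsilon-\beta}$ decays like $|x|^{2+4\varepsilon-2\beta}$ at infinity, the function $f_\varepsilon$ belongs to $\L^2(\mu_\beta)$ precisely when $2+4\varepsilon-2\beta<-1$, that is $\varepsilon<(2\beta-3)/4$. Because $f_\varepsilon$ is odd and $\mu_\beta$ is symmetric, $\int_\R f_\varepsilon\, d\mu_\beta=0$, so $\Var_{\mu_\beta}(f_\varepsilon)=\int_\R f_\varepsilon^2\, d\mu_\beta$ and the quotient reduces to a ratio of two integrals against $\omega^{-\beta}\,dx$.

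Next I would compute the two integrands. Differentiating gives $f_\varepsilon'=\omega^{\varepsilon-1}\bigl[1+(1+2\varepsilon)x^2\bigr]$, whence
\[\Gamma(f_\varepsilon)=\omega\,(f_\varepsilon')^2=\omega^{2\varepsilon-1}\bigl[1+(1+2\varepsilon)x^2\bigr]^2.\]
Both the numerator $\int_\R\Gamma(f_\varepsilon)\,d\mu_\beta$ and the denominator $\int_\R f_\varepsilon^2\,d\mu_\beta$ are then finite linear combinations of the standard Beta integrals
\[\int_\R \frac{x^{2k}}{(1+x^2)^{p}}\,dx=\frac{\Gamma\!\left(k+\tfrac12\right)\Gamma\!\left(p-k-\tfrac12\right)}{\Gamma(p)},\qquad p>k+\tfrac12,\]
with $p=\beta-2\varepsilon$ in the denominator and, after expanding the square, $p=\beta+1-2\varepsilon$ in the numerator.

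The decisive step is the asymptotics. Setting $q=\beta-2\varepsilon-\tfrac32$, one has $q\downarrow 0$ as $\varepsilon\uparrow(2\beta-3)/4$. In the denominator the relevant factor is $\Gamma\!\left(\beta-2\varepsilon-\tfrac32\right)=\Gamma(q)\sim 1/q$; in the numerator the only divergent contribution comes from the top-degree term $(1+2\varepsilon)^2x^4$, which carries the factor $\Gamma\!\left(\beta-2\varepsilon-\tfrac32\right)=\Gamma(q)\sim 1/q$ as well, whereas the contributions of the lower-order terms stay bounded. Hence both integrals blow up like $1/q$, their ratio tends to the ratio of leading coefficients $(1+2\varepsilon)^2$, and since $1+2\varepsilon\to\beta-\tfrac12$ at the critical value, the quotient converges to $\left(\beta-\tfrac12\right)^2$.

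The main obstacle is precisely this last point: both integrals diverge, so one must identify the common pole in $q$ and confirm that the three Gamma-function terms making up the numerator, apart from the one attached to $(1+2\varepsilon)^2$, remain bounded and therefore drop out in the limit. An equivalent and perhaps more transparent way to organise the argument is to observe that $\Gamma(f_\varepsilon)/f_\varepsilon^2\to(1+2\varepsilon)^2$ pointwise as $|x|\to\infty$, while the $\L^2(\mu_\beta)$-mass of $f_\varepsilon$ escapes to infinity as $\varepsilon\uparrow(2\beta-3)/4$; a concentration argument then forces the ratio of integrals to the same limit. The explicit Gamma-function computation merely makes this heuristic rigorous.
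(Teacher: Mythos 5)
Your proof is correct, but it takes a genuinely different route from the paper. In fact, the paper does not prove Proposition \ref{prop:1dopt} at all: it quotes it from \cite{BJM2}. The closest in-paper argument is the proof of the $n$-dimensional analogue in Section \ref{Sec:opt} (with $f_\varepsilon=\omega^\varepsilon$), and that proof avoids evaluating any integrals: it computes $Lf_\varepsilon$ in closed form, observes that $Lf_\varepsilon$ is a constant multiple of $f_\varepsilon$ plus a lower-order remainder, and then writes the Rayleigh quotient through $\int\Gamma(f_\varepsilon)\,d\mu=-\int f_\varepsilon Lf_\varepsilon\,d\mu$, so that the limit is read off from the coefficient in front of $f_\varepsilon$ once one knows that $\int f_\varepsilon^2\,d\mu\to\infty$ while the remainder pairing stays bounded. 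The same scheme works verbatim for your $f_\varepsilon(x)=x\omega^\varepsilon(x)$: one finds $Lf_\varepsilon=(1+2\varepsilon)\bigl(2\varepsilon-2(\beta-1)\bigr)f_\varepsilon+C_\varepsilon\, x\,\omega^{\varepsilon-1}$ with $x\,\omega^{\varepsilon-1}$ uniformly in $\L^2(\mu_\beta)$ near the critical exponent, and the coefficient $(1+2\varepsilon)\bigl(2(\beta-1)-2\varepsilon\bigr)$ tends to $(\beta-1/2)^2$. Your approach instead evaluates numerator and denominator as Beta integrals and extracts the common pole $\Gamma(q)\sim 1/q$ at $q=\beta-2\varepsilon-3/2\downarrow 0$; I checked the computation (the $\L^2$ threshold, the formula $f_\varepsilon'=\omega^{\varepsilon-1}[1+(1+2\varepsilon)x^2]$, the identification of the divergent $k=2$ term and the boundedness of the $k=0,1$ terms, and the limit $(1+2\varepsilon)^2\to(\beta-1/2)^2$) and it is sound. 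What each buys: your computation is self-contained and makes the divergence rates completely explicit, and your parity observation ($f_\varepsilon$ odd, hence $\Var_{\mu_\beta}(f_\varepsilon)=\int f_\varepsilon^2\,d\mu_\beta$ exactly) removes the $\L^1$-at-the-limit step the paper needs for its even test functions; the operator route is lighter on special functions and is the one that scales to the $n$-dimensional setting, which is precisely why the paper adopts it there. Your closing ``concentration'' heuristic is a fair intuition but, as you say yourself, it is the Gamma-function computation (or the operator identity) that carries the proof.
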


While looking for an extremal function $f$ associated with $(\beta-1/2)^2$, we saw that, up to regularity, if it does exist, this function would satisfy the equation 
\[\omega f''+\left(\frac{3}{2}-\beta\right)f'=0.\]
The solutions of this equation are primitive functions of $\omega^{(2\beta-3)/4}$. This explains the choice of $f_\varepsilon$. Heuristically, the sequence $f_\varepsilon$ converges towards a function close to cancel the Hessian part but which is not in $\L^2(\mu_\beta)$.

\section*{Acknowledgements}
We would like to thank François Bolley for his encouragements and for our many useful discussions. We also thank Geneviève Ropars for her helpful suggestions. This research is partially supported by the Centre Henri Lebesgue (ANR-11-LABX-0020-0) and  the ANR project RAGE "Analyse Réelle et Géométrie" (ANR-18-CE40-0012).


\bibliographystyle{plain} 
\bibliography{bibliography}       

\end{document}